\documentclass[12,reqno]{amsart}
\usepackage{amsmath, amssymb, amsthm}
\usepackage{url}
\usepackage[breaklinks]{hyperref}

\setlength{\textheight}{220mm} \setlength{\textwidth}{155mm}
\setlength{\oddsidemargin}{1.25mm}
\setlength{\evensidemargin}{1.25mm} \setlength{\topmargin}{0mm}

\parskip .04in

\newcommand{\s}{{\sigma}}

 \renewcommand{\a}{\alpha}
\renewcommand{\b}{\beta}
\newcommand{\e}{\epsilon}

\renewcommand{\l}{\lambda}

\renewcommand{\t}{\tau}

\renewcommand{\(}{\left\(}
\renewcommand{\)}{\right\)}
\renewcommand{\[}{\left\[}
\renewcommand{\]}{\right\]}
\numberwithin{equation}{section}
 \theoremstyle{plain}
\newtheorem{theorem}{Theorem}[section]
\newtheorem{lemma}[theorem]{Lemma}

\newtheorem{corollary}[theorem]{Corollary}

   \makeatletter
\def\proof{\@ifnextchar[{\@oproof}{\@nproof}}
\def\@oproof[#1][#2]{\trivlist\item[\hskip\labelsep\textit{#2 Proof of\
#1.}~]\ignorespaces}
\def\@nproof{\trivlist\item[\hskip\labelsep\textit{Proof.}~]\ignorespaces}

\makeatother

\begin{document}
\title[On a certain divisor function in Number fields]{On a certain divisor function in Number fields} 

\author{ Rajat Gupta AND Sudip Pandit}\thanks{2010 \textit{Mathematics Subject Classification.} Primary  11R42; Secondary 11M41, 11N37.\\
\textit{Keywords and phrases.} Number fields, Dedekind zeta function, Divisor function, Riesz sum, Bessel functions.}
\address{Discipline of Mathematics, Indian Institute of Technology Gandhinagar, Palaj, Gandhinagar 382355, Gujarat, India} 
\email{rajat\_gupta@iitgn.ac.in, sudip.pandit@iitgn.ac.in }
\maketitle

\begin{abstract} The main aim of this paper is to study an analogue of the generalized divisor function in a number field $\mathbb{K}$, namely,  $\sigma_{\mathbb{K},\alpha}(n)$. The Dirichlet series associated to this function is $\zeta_{\mathbb{K}}(s)\zeta_{\mathbb{K}}(s-\alpha)$. We give an expression for the Riesz sum associated to $\s_{\mathbb{K},\a}(n),$ and also extend the validity of this formula by using convergence theorems. As a special case, when $\mathbb{K}=\mathbb{Q}$, the Riesz sum formula for the generalized divisor function is obtained, which, in turn, for $\a=0$, gives the Vorono\"{\i} summation formula associated to the divisor counting function $d(n)$. We also obtain a big $O$-estimate  for the Riesz sum associated to $\sigma_{\mathbb{K},\alpha}(n)$.  
\end{abstract}

\section{Introduction}
Divisor functions and its various generalizations are one of  the most important arithmetic  functions in number theory. The average order of the classical divisor function, studied by Dirichlet in 1849, is given by
\begin{equation}\label{average0}
\displaystyle{\sum_{n\leq x}}{\vphantom{\sum}}'d(n)=x\log x+(2\gamma -1)x+\dfrac{1}{4}+\Delta(x),
\end{equation}
where $d(n)$ denotes the number of divisors of $n$ and $\gamma$ is Euler's constant. Here, and in the sequel, the prime $'$ sign in the sum $\displaystyle{\sum_{n\leq x}}{\vphantom{\sum}}'a(n)$  will be used to indicate that whenever $x$ is an integer only $\dfrac{1}{2}a(x)$ is counted. Finding the exact order of magnitude of the error term $\Delta(x)$ as $x\rightarrow\infty$, known as \textit{Dirichlet divisor problem}, seems to be one of the most difficult problems in analytic number theory and is still unsolved to date. Till now the best estimate $\Delta(x)=O(x^{131/416+\epsilon})$, for any $\epsilon>0$, as $x\rightarrow \infty$, is due to Huxley \cite{huxley}. Hardy proved that $\Delta(x)\neq O(x^{1/4})$ and  it has been conjectured that $\Delta(x)=O(x^{1/4+\epsilon})$, for any $\epsilon>0$, as $x\rightarrow \infty$. 

Let $J_{\nu}(x)$, $Y_{\nu}(x)$ be the Bessel function of the first and second kind \cite[p. 40-64]{watson}, respectively, and $K_{\nu}(x)$ to be the modified Bessel function of the second kind \cite[p.~78]{watson}. Vorono\"{\i} in \cite{voronoi}, established a representation of $\Delta(x)$ in terms of Bessel functions, known as the \textit{Vorono\"{\i} summation formula}. It is given by
\begin{align}\label{voronoisum}
\displaystyle{\sum_{n\leq x}}{\vphantom{\sum}}'d(n)=x\log x+(2\gamma -1)x+\dfrac{1}{4}+\sum_{n=1}^{\infty}d(n)\left(\dfrac{x}{n}\right)^{1/2}I_{1}(4\pi \sqrt{nx}),
\end{align}
where 
\begin{align}\label{I1}
I_{\nu}(z):=-Y_{\nu}(z)-\dfrac{2}{\pi}K_{\nu}(z).
\end{align}

In $1931$ Dixon and Ferrar obtained an identity for the Riesz sum 
\begin{align*}
&\sum_{n\leq x}{\vphantom{\sum}}'d(n)(x-n)^k.
\end{align*}
However, their identity only valid for non-negative integers $k$. Oppenheim \cite{open}, and  Laurin\v{c}ikas \cite{Lauri} studied the Riesz sum associated to generalized divisor function $\s_{\a}(n):=\sum_{d|n}d^\a$. As a special case, Laurin\v{c}ikas also derive an extension of the Vorono\"{\i} summation formula \eqref{voronoisum}, namely,
\begin{align}\label{extendedvoro}
\sum_{n\leq x}&\s_{-\a}(n)=\zeta(1+\a)x+\frac{\zeta(1-\a)}{1-\a}x^{1-\a}-\frac{\zeta(\a)}{2}+\frac{x}{2\sin \left(\frac{\pi \a}{2} \right)}\sum_{n=1}^{\infty}\s_{\a}(n) \times\nonumber\\
&\times(\sqrt{nx})^{-1-\a}\left(J_{s-1}(4\pi \sqrt{nx})+J_{1-s}(4\pi \sqrt{nx}) -\frac{2}{\pi}\sin(\pi s)K_{1-s}(4\pi \sqrt{n x})\right).
\end{align}
Note that, Laurin\v{c}ikas proved this extended version of the Vorono\"{\i} summation formula for $0<\a<1/2$. In this paper, as a special case of our main theorem, we prove that this extended version is also valid in $|\textup{Re}(\a)|<1/2$ (see corollary \ref{Laurichikastheorem} below). 

This project stemmed from the study of the Risez sum associated to a certain divisor function in a number field. Let $\mathbb{K}$ be an algebraic number field of degree $\rho$ and $\mathcal{O}_{\mathbb{K}}$ be its ring of integers. Given an ideal $\mathfrak{a}\subset \mathcal{O}_{\mathbb{K}}$  the norm of the ideal is defined as $$\mathcal{N}(\mathfrak{a}):=|\mathcal{O}_{\mathbb{K}}/{\mathfrak{a}}|.$$



We define the generalized divisor function associated to a number field $\mathbb{K}$ by 
\begin{align}\label{divisordefinition}
\sigma_{\mathbb{K},\alpha}(n):=\sum_{d|n}a(d)a\left(\frac{n}{d}\right)d^\alpha,
\end{align}
where $a(d):=\#$ ideals in $\mathcal{O}_{\mathbb{K}}$ with norm $d$. 
It satisfy the following relation
\begin{align*}
\s_{\mathbb{K},\a}(n)n^{-\a/2}=\s_{\mathbb{K},-\a}(n)n^{\a/2}.
\end{align*} 

It is easy to see that the Dirichlet series associated to $\sigma_{\mathbb{K},\alpha}(n)$ is given by
\begin{align}
\sum_{n=1}^{\infty}\frac{\sigma_{\mathbb{K},\alpha}(n)}{n^s} =\zeta_{\mathbb{K}}(s)\zeta_{\mathbb{K}}(s-\alpha),~\mathrm{Re}(s)>\mathrm{max}\{1,1+\mathrm{Re}(\alpha)\},
\end{align}
where $\zeta_{\mathbb{K}}(s)$ is the Dedekind zeta function associated to the number field $\mathbb{K}$ defined by  
\begin{equation}\label{Dede_zeta_def}\zeta_{\mathbb{K}}(s):=\displaystyle{\sum_{\mathfrak{a\subset \mathcal{O}_{\mathbb{K}}}}\dfrac{1}{\mathcal{N}(\mathfrak{a})^{s}}}=\displaystyle{\sum_{n=1}^{\infty}\dfrac{a(n)}{n^{s}}} ,~\mathrm{for}~\mathrm{Re}(s)>1.
\end{equation} 
When $\mathbb{K}=\mathbb{Q}$, we have $a(d)=1,$ then $\sigma_{\mathbb{\mathbb{Q}},\alpha}(n)$ reduces to $\sigma_{\alpha}(n)=\sum_{d|n}d^{\alpha}$, and $\zeta_{\mathbb{\mathbb{Q}}}(s)$ is the Riemann zeta function $\zeta(s)$. 

If $r_1, ~r_2$ denote the number of real and complex (upto conjugate) embeddings of  $\mathbb{K}$, respectively, then we have
\begin{align}\label{r1r2}
r:=r_1+r_2-1 ~\mathrm{and}~ \rho:=r_1+2r_2,
\end{align}
where $r$ is the rank of $\mathcal{O}^{\times}_{\mathbb{K}}$ and $\rho$ is the degree of $\mathbb{K}/\mathbb{Q}$. Similar to the Riemann zeta function, $\zeta_{\mathbb{K}}(s)$ has a simple pole at $s=1$ with residue
\begin{align}
\lim_{s\to 1}(s-1)\zeta_{\mathbb{K}}(s)=\frac{2^{r+1}\pi^{r_2}R\cdot h}{\omega\sqrt{|\Delta_{\mathbb{K}}|}},
\end{align}
where $\omega$ denotes the number of roots of unity in $\mathbb{K}$, $R$ is the regulator, $h$ is the class number, and $\Delta_{\mathbb{K}}$ denotes the discriminant of $\mathbb{K}$.

To state our main theorem the following function plays an important role:
\begin{align}\label{DefJ}
x^{-\nu}\mathbb{J}_{\nu}(\a,x):=\frac{1}{2\pi i} \int_{c-i\infty}^{c+i\infty}\frac{2^{(2\rho s -\nu-\rho+1)}\Gamma(1-s)G(1-s)G(1-s+\a)}{\Gamma(\nu+1-s)}x^{-2s}ds
\end{align}
where 
\begin{align}\label{G(s)}
\max\{1,\textup{Re}(\a)\}<c<\frac{k+1+\rho(1+\textup{Re}(\a))}{2\rho}~\mathrm{and}~ G(s)=\frac{\Gamma^{r_1}\left(\frac{1-s}{2}\right)}{\Gamma^{r_1}\left(\frac{s}{2}\right)}\frac{\Gamma^{r_2}(1-s)}{\Gamma^{r_2}(s)}. 
\end{align}
Surprisingly, $\mathbb{J}_{\nu}(\a,x)$ enjoys a recurrence relation similar to that of $J_{\nu}(x)$ \cite[p. 202, Equation (4.6.1)]{aar}, that is, 
\begin{align}\label{diff}
\frac{d}{dx}\left[x^{\nu}\mathbb{J}_{\nu}(\a,x)\right]=x^{\nu}\mathbb{J}_{\nu-1}(\a,x).
\end{align}

We are now ready to state our first main theorem.
\begin{theorem}\label{maintheorem}
For $k>\rho\left(1+|\textup{Re}(\a)|\right)-\frac{1}{2}$ and $x>0,$
\begin{align}\label{maintheoremeqn}
&\frac{1}{\Gamma(k+1)}\sum_{n\leq x}{\vphantom{\sum}}'\sigma_{\mathbb{K},-\alpha}(n)(x-n)^k =R_{\a}(k,\rho,x)+\frac{2^{k(1-\rho)}}{A^{2(1+\a)}}\sum_{n=1}^{\infty}\s_{\mathbb{K},\a}(n)\left(\frac{A^4x}{n}\right)^{(1+k)/2}\mathbb{J}_{1+k}\left(\a,2^{\rho}\sqrt{\frac{nx}{A^4}}\right),
\end{align}
where $A= \displaystyle{\frac{\sqrt{|\Delta_{\mathbb{K}}|}}{2^{r_2}\pi^{\rho/2}}}$, and
\begin{align*}
R_{\a}(k,\rho,x):=\frac{\zeta_{\mathbb{K}}(0)\zeta_{\mathbb{K}}(\a)x^k}{k!}&+\frac{2^{r+1}\pi^{r_2}R\cdot h}{\omega\sqrt{|\Delta_{\mathbb{K}}|}}\frac{\zeta_{\mathbb{K}}(1+\a)}{(k+1)!}x^{k+1}\\
&\quad+\frac{2^{r+1}\pi^{r_2}R\cdot h}{\omega\sqrt{|\Delta_{\mathbb{K}}|}}\frac{\Gamma(1-\a)\zeta_{\mathbb{K}}(1-\a)}{\Gamma(2+k-\a)}x^{k+1-\a}.
\end{align*}
\end{theorem}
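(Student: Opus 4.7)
The plan is to apply a Perron-type integral representation for the Riesz sum, shift the line of integration to the left picking up the poles' residues, and then use the functional equation of the Dedekind zeta function to recognise the remaining integral as the Mellin--Barnes integral defining $\mathbb{J}_{1+k}$. First I would write down the Perron formula
\begin{equation*}
\frac{1}{\Gamma(k+1)}\sum_{n\leq x}{\vphantom{\sum}}'\s_{\mathbb{K},-\a}(n)(x-n)^k \;=\; \frac{1}{2\pi i}\int_{c-i\infty}^{c+i\infty}\zeta_{\mathbb{K}}(s)\zeta_{\mathbb{K}}(s+\a)\,\frac{\Gamma(s)\,x^{s+k}}{\Gamma(s+k+1)}\,ds,
\end{equation*}
valid for $c$ larger than $1+|\textup{Re}(\a)|$. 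This is a direct consequence of the inverse Mellin identity $(x-n)_{+}^{k}/k!=\frac{1}{2\pi i}\int_{(c)}n^{-s}\Gamma(s)x^{s+k}/\Gamma(s+k+1)\,ds$ combined with the absolute convergence of $\sum_{n}\s_{\mathbb{K},-\a}(n)/n^s = \zeta_{\mathbb{K}}(s)\zeta_{\mathbb{K}}(s+\a)$.

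Next I would shift the contour from $\textup{Re}(s)=c$ down to a line $\textup{Re}(s)=-c'$ lying to the left of $s=0$, with $c'$ chosen so that $1+c'$ still sits inside the half-plane of absolute convergence of $\zeta_{\mathbb{K}}(1-s)\zeta_{\mathbb{K}}(1-s-\a)$. The shift crosses three simple poles: $s=1$ coming from $\zeta_{\mathbb{K}}(s)$, $s=1-\a$ from $\zeta_{\mathbb{K}}(s+\a)$, and $s=0$ from $\Gamma(s)$. Using the class number residue $\kappa:=2^{r+1}\pi^{r_2}Rh/(\omega\sqrt{|\Delta_{\mathbb{K}}|})$ of $\zeta_{\mathbb{K}}$ at $s=1$, the residue at $s=1-\a$ picks up an extra $\Gamma(1-\a)/\Gamma(2+k-\a)$, while the residue at $s=0$ picks up $\zeta_{\mathbb{K}}(0)\zeta_{\mathbb{K}}(\a)/\Gamma(k+1)$; summing these three contributions reproduces $R_{\a}(k,\rho,x)$ exactly.

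On the new contour I would invoke Hecke's functional equation which, after rewriting the completed Dedekind zeta via $\Gamma_{\mathbb{R}}$ and $\Gamma_{\mathbb{C}}$, takes the compact symmetric form $\zeta_{\mathbb{K}}(s)=A^{1-2s}G(s)\zeta_{\mathbb{K}}(1-s)$. Applying this to both $\zeta$-factors turns the integrand into $A^{2-4s-2\a}G(s)G(s+\a)\Gamma(s)x^{s+k}/\Gamma(s+k+1)$ multiplied by $\zeta_{\mathbb{K}}(1-s)\zeta_{\mathbb{K}}(1-s-\a)=\sum_{n}\s_{\mathbb{K},\a}(n)/n^{1-s}$ on the shifted line. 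Swapping sum and integral and pulling the factor $A^{2-2\a}x^{k}/n$ outside, one is reduced to identifying
\begin{equation*}
\frac{1}{2\pi i}\int_{(-c')}\Gamma(s)\,G(s)\,G(s+\a)\,\frac{(nx/A^{4})^{s}}{\Gamma(s+k+1)}\,ds
\end{equation*}
for each $n\geq 1$. The substitution $s\mapsto 1-s$ converts this into the Mellin--Barnes integral \eqref{DefJ} defining $\mathbb{J}_{1+k}(\a,y)$ at $y=2^{\rho}\sqrt{nx/A^{4}}$, and a short bookkeeping of the factors of $A$, $2$, $n$, $x$ collapses everything to the series on the right-hand side of \eqref{maintheoremeqn}.

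The main obstacle is the rigorous justification of the contour shift and of the interchange of summation and integration. These rest on the Stirling asymptotics $|G(\sigma+it)|\asymp|t|^{\rho(1-2\sigma)/2}$ and $|\Gamma(s)/\Gamma(s+k+1)|\asymp|t|^{-k-1}$, combined with a convexity bound for $\zeta_{\mathbb{K}}$ on vertical lines. The integrand on $\textup{Re}(s)=-c'$ therefore decays like $|t|^{\rho(1+2c'-\textup{Re}(\a))-k-1}$, and the hypothesis $k>\rho(1+|\textup{Re}(\a)|)-\tfrac{1}{2}$ is precisely what is needed, after optimizing over small $c'>0$ and exploiting the $|t|^{-1/2}$ refinement built into Stirling, to ensure that the horizontal connectors in the truncated contour vanish and that the Dirichlet series converges absolutely after the interchange.
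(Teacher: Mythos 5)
Your overall architecture coincides with the paper's: the Riesz--Perron representation (the paper's Lemma \ref{chandr}), the three residues at $s=0,1,1-\a$, the horizontal connectors killed via Stirling, Rademacher's convexity bound and Phragm\'en--Lindel\"of (Lemma \ref{lind}), and the functional equation followed by $s\mapsto 1-s$ to recognise the Mellin--Barnes integral \eqref{DefJ}. The residues and the bookkeeping of powers of $A$, $2$, $n$, $x$ are correct. The gap is in your final paragraph, where you claim that the hypothesis $k>\rho(1+|\textup{Re}(\a)|)-\tfrac12$ is exactly what is needed for the contour shift and the termwise interchange. It is not. On the shifted line $\textup{Re}(s)=1-\s$ one has $|G(s)G(s+\a)\Gamma(s)/\Gamma(s+k+1)|\asymp |t|^{\rho(2\s-\textup{Re}(\a)-1)-k-1}$, and since absolute convergence of the Dirichlet series forces $\s>1+|\textup{Re}(\a)|$, taking absolute values termwise only yields the identity for $k$ strictly larger than (roughly) $\rho(1+|\textup{Re}(\a)|)$. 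The ``$|t|^{-1/2}$ refinement built into Stirling'' is already accounted for in that polynomial exponent and cannot produce the extra saving of $\tfrac12$ in $k$; no choice of $c'$ closes this gap.

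The missing half-power comes from the oscillation of $\mathbb{J}_{1+k}$ itself, not from the vertical decay of the integrand, and extracting it requires two ingredients that your proposal omits. First, an asymptotic expansion of $\mathbb{J}_{1+k}(\a,y)$ as $y\to\infty$: the paper's Lemma \ref{JJ} compares the integrand of \eqref{DefJ} with a single quotient $\Gamma(\rho s+\mu)/\Gamma(\l-\rho s)e^{\Theta s+B}$ via Stirling's series, evaluates the model integral as a genuine Bessel function $J_{k+\rho-2r-1}$, and concludes that $\mathbb{J}_{1+k}(\a,2^{\rho}\sqrt{nx/A^4})=O\bigl((nx/A^4)^{(2k(\rho-1)-2\rho\textup{Re}(\a)-1)/(4\rho)}\bigr)$; it is the $y^{-1/2}$ decay of $J_{\nu}$ that makes the series in \eqref{maintheoremeqn} converge absolutely and uniformly precisely for $k>\rho(1+|\textup{Re}(\a)|)-\tfrac12$. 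Second, a descent in $k$: having proved the identity for $k$ large by your (correct) absolute-convergence argument, one uses the recurrence \eqref{diff}, $\frac{d}{dx}[x^{\nu}\mathbb{J}_{\nu}(\a,x)]=x^{\nu}\mathbb{J}_{\nu-1}(\a,x)$, together with the uniform convergence just established, to differentiate the identity for $k+1$ term by term and deduce it for $k$, repeating until the stated threshold is reached. Without these two steps your argument proves the theorem only in a strictly smaller range of $k$ than claimed.
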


As an immediate application of the above theorem, we obtain a big $O$-estimate result for the left-hand side of \eqref{maintheoremeqn}.

\begin{corollary}\label{maintheoremO}
For $k>\rho\left(1+|\textup{Re}(\a)|\right)-\frac{1}{2}$, and $x>0$
\begin{align}\label{maintheoremOeqn}
&\frac{1}{\Gamma(k+1)}\sum_{n\leq x}{\vphantom{\sum}}'\sigma_{\mathbb{K},-\alpha}(n)(x-n)^k =R_{\a}(k,\rho,x)+O\left( x^{\frac{(2\rho-1)k+\rho\left(1-\textup{Re}(\a)\right)-1/2}{2\rho}}\right).
\end{align}
\end{corollary}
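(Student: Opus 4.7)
The plan is to deduce Corollary \ref{maintheoremO} directly from Theorem \ref{maintheorem}: after subtracting $R_\a(k,\rho,x)$ from both sides of \eqref{maintheoremeqn}, it suffices to bound the Bessel-type series
$$S(x):=\sum_{n=1}^{\infty}\s_{\mathbb{K},\a}(n)\left(\frac{A^4x}{n}\right)^{(1+k)/2}\mathbb{J}_{1+k}\!\left(\a,2^{\rho}\sqrt{\frac{nx}{A^4}}\right)$$
by a quantity of order $x^{((2\rho-1)k+\rho(1-\Re(\a))-1/2)/(2\rho)}$; the prefactor $2^{k(1-\rho)}/A^{2(1+\a)}$ is an absolute constant and can be absorbed into the implicit $O$-constant.

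The first step is to extract the large-argument asymptotics of $\mathbb{J}_\nu(\a,y)$. Starting from the Mellin-Barnes representation \eqref{DefJ}, I would apply Stirling's formula to the $2\rho$ Gamma factors packaged in $G(1-s)G(1-s+\a)$, together with $\Gamma(1-s)/\Gamma(\nu+1-s)$, and shift the line of integration to the right, collecting the residues that produce the oscillatory Bessel-type contributions. This identifies $\mathbb{J}_\nu(\a,y)$ with an explicit $\rho$-parameter family of Bessel-type special functions in the spirit of \eqref{extendedvoro} (where for $\rho=1$ one sees the $J_{s-1}+J_{1-s}$ and $K_{1-s}$ combination), from which a sharp polynomial decay rate in $y$, with a precise dependence on $\Re(\a)$ and $\rho$, can be read off; the exponentially decaying $K$-type pieces are negligible.

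Next, armed with this asymptotic and the standard divisor bound $\s_{\mathbb{K},\a}(n)\ll n^{\max(0,\Re(\a))+\e}$, I would insert the estimate into $S(x)$ and split the series at a cut-off $N=x^{\theta}$, using the near-origin bound $\mathbb{J}_\nu(\a,y)\ll y^\nu$ for the head $n\le N$ and the decay estimate for the tail $n>N$. Optimising $\theta\in(0,1)$ so as to balance the two contributions should yield precisely the exponent $((2\rho-1)k+\rho(1-\Re(\a))-1/2)/(2\rho)$; the standing hypothesis $k>\rho(1+|\Re(\a)|)-1/2$ is exactly what is needed to guarantee absolute convergence of each majorising series on both sides of the cut-off.

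The main obstacle is locating the sharp asymptotic for $\mathbb{J}_\nu(\a,y)$: the combined behaviour of the $r_1$ pairs of half-integer-shifted and $r_2$ pairs of integer-shifted Gamma factors, which together encode the functional equation of $\zeta_{\mathbb{K}}$, must be tracked carefully through Stirling's formula in order to extract the correct power of $y$ and the correct $\a$-dependence. Once that asymptotic is in hand, the remaining splitting-and-optimising argument, together with the recurrence \eqref{diff} if a derivative or integration-by-parts refinement is required, is routine.
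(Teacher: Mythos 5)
Your high-level plan --- subtract $R_{\a}(k,\rho,x)$ from \eqref{maintheoremeqn} and bound the remaining series via the large-argument behaviour of $\mathbb{J}_{1+k}(\a,\cdot)$ --- is indeed how the corollary follows: the asymptotic you propose to extract from \eqref{DefJ} by Stirling's formula is exactly Lemma \ref{JJ} (already proved in the paper, so it need not be re-derived), and the resulting estimate is already recorded as \eqref{O} inside the proof of Theorem \ref{maintheorem}, from which the corollary is immediate.

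However, your third step contains a genuine error. The argument of $\mathbb{J}_{1+k}$ in the $n$-th term is $2^{\rho}\sqrt{nx/A^4}$, which is \emph{increasing} in $n$ and is already at least $2^{\rho}\sqrt{x/A^4}$ at $n=1$; as $x\to\infty$ every term sits in the large-argument regime, so there is no ``head'' on which a near-origin bound is relevant. Worse, if you did apply $\mathbb{J}_{1+k}(\a,y)\ll y^{1+k}$ to the terms $n\le N$, the single term $n=1$ would already contribute $\asymp x^{(1+k)/2}\cdot x^{(1+k)/2}=x^{1+k}$, which is of the same order as the main terms in $R_{\a}(k,\rho,x)$ and strictly larger than the claimed error $x^{((2\rho-1)k+\rho(1-\Re(\a))-1/2)/(2\rho)}$; indeed that exponent equals $k+1-\tfrac{k+\rho(1+\Re(\a))+1/2}{2\rho}$, and the subtracted quantity is positive under the standing hypothesis. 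No choice of $\theta$ repairs this, and there is nothing to balance. The correct argument applies the decay estimate to \emph{every} $n$: Lemma \ref{JJ} together with $J_{\nu}(z)=O(z^{-1/2})$ from \eqref{Besseljasym} gives $\mathbb{J}_{1+k}\bigl(\a,2^{\rho}\sqrt{nx/A^4}\bigr)=O\bigl((nx/A^4)^{(2k(\rho-1)-2\rho\Re(\a)-1)/(4\rho)}\bigr)$, so the $n$-th term of the series is $\ll x^{((2\rho-1)k+\rho(1-\Re(\a))-1/2)/(2\rho)}\,\s_{\mathbb{K},\Re(\a)}(n)\,n^{-(k+\rho(1+\Re(\a))+1/2)/(2\rho)}$, and the hypothesis $k>\rho(1+|\Re(\a)|)-\tfrac12$ is precisely what places the exponent of $n$ beyond the abscissa $\max\{1,1+\Re(\a)\}$ of absolute convergence of $\zeta_{\mathbb{K}}(s)\zeta_{\mathbb{K}}(s-\Re(\a))$ --- it guarantees convergence of the full majorant series, not the legitimacy of a cut-off optimisation.
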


Next, we state our second main theorem, in which we extend the validity of \eqref{maintheoremeqn}. 

\begin{theorem}\label{extendedmaintheorem}
Theorem \ref{maintheorem} is valid for $k>\rho\left(1+|\textup{Re}(\a)|\right)-\frac{3}{2}.$
\end{theorem}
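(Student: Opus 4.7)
The plan is to reduce Theorem \ref{extendedmaintheorem} to Theorem \ref{maintheorem} by differentiating the established identity once with respect to $x$. Fix $k$ with $k > \rho(1+|\Re(\alpha)|) - \tfrac{3}{2}$ and set $K := k+1$, so that $K > \rho(1+|\Re(\alpha)|) - \tfrac{1}{2}$ and Theorem \ref{maintheorem} applies at parameter $K$. Since $\rho \ge 1$ forces $K > \tfrac{1}{2} > 0$, for $x > 0$ with $x \notin \mathbb{Z}$ the Riesz sum is smooth in a neighbourhood of $x$ and
\begin{align*}
\frac{d}{dx}\!\left[\frac{1}{\Gamma(K+1)}\sum_{n\leq x}{\vphantom{\sum}}'\sigma_{\mathbb{K},-\alpha}(n)(x-n)^{K}\right] = \frac{1}{\Gamma(k+1)}\sum_{n\leq x}{\vphantom{\sum}}'\sigma_{\mathbb{K},-\alpha}(n)(x-n)^{k};
\end{align*}
an elementary computation gives $\frac{d}{dx}R_\alpha(K,\rho,x)=R_\alpha(k,\rho,x)$. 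For the Bessel-series on the RHS, set $c_n := 2^\rho \sqrt{n}/A^2$; the substitution $u = c_n\sqrt{x}$ combined with the recurrence \eqref{diff} yields
\begin{align*}
\frac{d}{dx}\!\left[x^{(K+1)/2}\mathbb{J}_{K+1}(\alpha, c_n\sqrt{x})\right] = \frac{c_n}{2}\,x^{K/2}\mathbb{J}_{K}(\alpha, c_n\sqrt{x}),
\end{align*}
and checking the remaining prefactors shows that the termwise derivative of the series in \eqref{maintheoremeqn} at parameter $K$ agrees exactly with the corresponding series at parameter $k$.

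The principal task is therefore to justify the interchange of $\frac{d}{dx}$ and the infinite sum. Concretely, one must show that the formally differentiated series converges uniformly on compact subsets of $(0,\infty)\setminus\mathbb{Z}$. The natural route is to push the contour in \eqref{DefJ} and apply Stirling's formula to the Gamma quotient, producing a decay estimate of the shape $\mathbb{J}_{K}(\alpha, 2^\rho \sqrt{nx/A^4}) \ll_x n^{-\beta}$ for large $n$. Combined with the standard divisor bound $\sigma_{\mathbb{K},\alpha}(n) \ll n^{1+|\Re(\alpha)|+\epsilon}$, this should give absolute convergence of the differentiated series in the extended range $k > \rho(1+|\Re(\alpha)|) - \tfrac{3}{2}$. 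The analysis here is essentially the one already carried out in the proof of Corollary \ref{maintheoremO}, but now gaining one extra order of Bessel decay via \eqref{diff}.

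Once uniform convergence is in hand, the fundamental theorem of calculus delivers \eqref{maintheoremeqn} at parameter $k$ for every non-integer $x > 0$, and matching one-sided limits at integer $x$ (continuity of the polynomial part is trivial, of the Riesz sum follows from $k > -\tfrac{1}{2}$, and of the series from the just-proved convergence) then extends the identity to every $x>0$. The main technical hurdle I anticipate is obtaining a sufficiently sharp pointwise bound on $\mathbb{J}_{K}$ to secure absolute convergence on the very boundary of the new range; if this fails precisely at the endpoint, the argument can be salvaged by partial summation, exploiting the oscillatory asymptotics of $\mathbb{J}_{K}$ together with the classical mean-value bounds on $\sum_{n\le N}\sigma_{\mathbb{K},\alpha}(n)$.
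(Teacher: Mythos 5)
Your reduction --- apply Theorem \ref{maintheorem} at $K=k+1$ and differentiate once in $x$, using \eqref{diff} to lower the index of $\mathbb{J}$ --- is exactly the skeleton of the paper's argument. The genuine gap is in how you propose to justify the termwise differentiation. You claim that a pointwise bound on $\mathbb{J}_{K}$ together with the divisor bound ``should give absolute convergence of the differentiated series in the extended range $k>\rho\left(1+|\Re(\alpha)|\right)-\tfrac{3}{2}$.'' This is false, and not merely at an endpoint: the computation in \eqref{O} shows that the series at parameter $k$ is dominated by $\sum_{n}\sigma_{\mathbb{K},\Re(\alpha)}(n)\,n^{-(k+\rho(1+\Re(\alpha))+1/2)/(2\rho)}$, which converges precisely when $k>\rho\left(1+|\Re(\alpha)|\right)-\tfrac{1}{2}$. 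The envelope of $\mathbb{J}_{1+k}$ is governed by genuine $J$-Bessel asymptotics (Lemma \ref{JJ} together with \eqref{Besseljasym}), so no contour shift can improve the decay in $n$; on the entire unit-length interval $\rho(1+|\Re(\alpha)|)-\tfrac{3}{2}<k\le\rho(1+|\Re(\alpha)|)-\tfrac{1}{2}$ the differentiated series is at best conditionally convergent, and the uniform convergence on compacta that your argument requires is exactly what cannot be obtained this way.

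Your fallback remark about partial summation against the oscillation is the right instinct, but it is the entire content of the theorem rather than a patch. The paper makes it precise by substituting $x=y^{2\rho}$, using Lemma \ref{JJ} to split each term into explicit trigonometric terms in $y$ (the $\cos w'$ and $\sin w'$ pieces) plus an absolutely convergent remainder, and then invoking the Zygmund-type equi-convergence results (Lemmas \ref{1} and \ref{2}, packaged as Lemma \ref{k+2wala}) under the hypotheses \eqref{3/2} and \eqref{1/2}: the series at level $k+1$, multiplied by $\rho\cdot 2^{\rho}y^{2\rho-1}$, is shown to be uniformly \emph{equi-convergent} with the differentiated Fourier series of $A(y)F_{k+2}(\alpha,y)$, and the Riemann localization principle then yields \eqref{maintheoremeqn} at parameter $k$. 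To complete your proof you would need to supply this equi-convergence step (or an equivalent summation-by-parts argument with the mean-value hypotheses on $\sum_{n\le N}\sigma_{\mathbb{K},\alpha}(n)$ made explicit and verified); as written, the assertion ``once uniform convergence is in hand'' assumes precisely what has to be proved.
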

In Theorem \ref{extendedmaintheorem}, if we take  $\mathbb{K}=\mathbb{Q}$, then we obtain a formula given by Laurin\v{c}ikas \cite[Theorem 1]{Lauri} and \eqref{voronoisum}.
\begin{corollary}\label{Laurichikastheorem}
For $k>|\textup{Re}(\a)|-\frac{1}{2}$, 
\begin{align}
\frac{1}{\Gamma(k+1)}&\sum_{n\leq x}{\vphantom{\sum}}'\s_{-\a}(n)(x-n)^k =R_{\a}(k,1,x)+\frac{(2\pi)^{1+\a}x^{1+k}}{\sin\left(\frac{\pi \a}{2} \right)}\sum_{n=1}^{\infty}\s_{\a}(n)\l_{1+k}(4\pi\sqrt{nx},\a),
\end{align}
where 
\begin{align*}
\l_{\nu}(z,\a)&=\frac{1}{2}\left(\frac{z}{2} \right)^{-\nu-\a}\Bigg( J_{\nu-\a}(z,\a)+I_{\nu-\a}(z,\a)\Bigg)-\frac{1}{2}\left(\frac{z}{2} \right)^{-\nu}\Bigg( {}^+I_{\nu}(z,\a)- {}^+J_{\nu}(z,\a)\Bigg)
\end{align*}
defined in \cite[p.~82]{Lauri}. Hence, if we assume $k=0$, then we obtain \eqref{extendedvoro} for $|\textup{Re}(\a)|<\frac{1}{2}$. If we further assume $\a=0$, we get \eqref{voronoisum}.
\end{corollary}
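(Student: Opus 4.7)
The plan is to derive the corollary by specializing Theorem \ref{extendedmaintheorem} to $\mathbb{K} = \mathbb{Q}$ and then identifying the Mellin--Barnes kernel $\mathbb{J}_{1+k}$ with Laurin\v{c}ikas's Bessel-function combination $\lambda_{1+k}$. For $\mathbb{K} = \mathbb{Q}$ one has $\rho = 1$, $r_1 = 1$, $r_2 = 0$ and $\Delta_{\mathbb{Q}} = 1$, hence $A = \pi^{-1/2}$; the hypothesis $k > \rho(1+|\textup{Re}(\alpha)|) - 3/2$ becomes the stated $k > |\textup{Re}(\alpha)| - 1/2$, and the outer prefactor $2^{k(1-\rho)}/A^{2(1+\alpha)}$ collapses to $\pi^{1+\alpha}$. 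Substituting $A^4 x/n = x/(\pi^2 n)$ and $2^{\rho}\sqrt{nx/A^4} = 2\pi\sqrt{nx}$ in \eqref{maintheoremeqn} reduces the corollary to a single kernel identity: matching the $\mathbb{Q}$-specialization of $\mathbb{J}_{1+k}(\alpha,\,2\pi\sqrt{nx})$ against $\lambda_{1+k}(4\pi\sqrt{nx},\alpha)$, with the remaining constants working out to $(2\pi)^{1+\alpha}/\sin(\pi\alpha/2)$.

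To establish this kernel identity I would work directly with \eqref{DefJ}. With $r_1 = 1,\,r_2 = 0$ the gamma weight reduces to $G(s) = \Gamma((1-s)/2)/\Gamma(s/2)$, so
\begin{equation*}
G(1-s)\,G(1-s+\alpha) \;=\; \frac{\Gamma(s/2)\,\Gamma((s-\alpha)/2)}{\Gamma((1-s)/2)\,\Gamma((1-s+\alpha)/2)}.
\end{equation*}
Applying the reflection formula $\Gamma(w)\Gamma(1-w) = \pi/\sin(\pi w)$ at $w = (1-s)/2$ and $w = (1-s+\alpha)/2$ turns the two denominators into sines and introduces the factor $1/\sin(\pi\alpha/2)$ via a product-to-sum identity. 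The Legendre duplication formula then pairs the surviving gamma factors with the $\Gamma(1-s)$ and $1/\Gamma(\nu+1-s)$ already present in \eqref{DefJ}, recasting the integrand into the standard Mellin--Barnes kernels that represent $J_{\nu-\alpha}, J_{\nu}, Y_{\nu-\alpha}, K_{\nu-\alpha}$ (see \cite[Ch.~VI]{watson}). A mild contour shift and grouping of the four pieces should reproduce exactly Laurin\v{c}ikas's combination $\lambda_{1+k}$ from \cite[p.~82]{Lauri}.

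With the kernel identity in hand, the two further specializations follow quickly. Setting $k = 0$ collapses the Riesz sum to $\sum_{n \le x}{\vphantom{\sum}}'\sigma_{-\alpha}(n)$; since $\zeta(0) = -1/2$ and $\lim_{s\to 1}(s-1)\zeta(s) = 1$, the term $R_{\alpha}(0,1,x)$ reproduces the polynomial part of \eqref{extendedvoro}, and Laurin\v{c}ikas's formula is recovered for $|\textup{Re}(\alpha)| < 1/2$. Letting $\alpha \to 0$, the apparent pole of $1/\sin(\pi\alpha/2)$ is cancelled by the differences $J_{\nu-\alpha}-J_{\nu}$ and $I_{\nu-\alpha}-I_{\nu}$ hidden inside $\lambda_{\nu}(z,\alpha)$---both are $O(\alpha)$ by Taylor expansion at $\alpha=0$---leaving precisely the combination $I_{1}(4\pi\sqrt{nx}) = -Y_{1}(4\pi\sqrt{nx}) - (2/\pi)K_{1}(4\pi\sqrt{nx})$ of \eqref{I1}, which is exactly what appears in \eqref{voronoisum}.

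The main obstacle is the bookkeeping in the second paragraph. Although each individual gamma-function manipulation is routine, verifying that the four resulting Bessel pieces reassemble into Laurin\v{c}ikas's $\lambda_{1+k}(4\pi\sqrt{nx},\alpha)$ with precisely the constant $(2\pi)^{1+\alpha}x^{1+k}/\sin(\pi\alpha/2)$---rather than some trivially equivalent but differently grouped form---requires careful tracking of powers of $2$ and $\pi$ through the duplication formulas, and a prudent choice of contour to respect the convergence strip of each individual Bessel Mellin representation.
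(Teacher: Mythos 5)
Your proposal is correct and follows essentially the same route as the paper: specialize Theorem \ref{extendedmaintheorem} to $\mathbb{K}=\mathbb{Q}$ (so $\rho=1$, $A=\pi^{-1/2}$, prefactor $\pi^{1+\alpha}$, condition $k>|\textup{Re}(\a)|-\tfrac12$) and then identify the resulting Mellin--Barnes kernel $I_{1,0,k}$ with Laurin\v{c}ikas's $\l_{1+k}$. The only difference is that where you sketch a direct reflection-and-duplication derivation of that kernel identity (the bookkeeping you flag as the main obstacle), the paper simply delegates it to \cite[Equation 9 and Lemma 3]{Lauri}, so your route would amount to reproving Laurin\v{c}ikas's lemma.
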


\section{Nuts and Bolts}
We recall the Stirling bound in the vertical strip \cite[p. 21, Corollary 1.4.4]{aar}, namely, for $z=x+i y$, $x_1<x<x_2$  and $|y|\to \infty$,
\begin{align}\label{sterling}
|\Gamma(z)| = \sqrt{2\pi}~|y|^{x-1/2}e^{-\pi |y|/2}\left(1+O\left(\frac{1}{y} \right)\right).
\end{align} 

The Dedekind zeta function $\zeta_{\mathbb{K}}(s)$ admits the following functional equation: 
\begin{align}\label{functional}
\zeta_{\mathbb{K}}(s) =A^{1-2s}G(s)\zeta_{\mathbb{K}}(1-s),
\end{align}
where $A= \displaystyle{\frac{\sqrt{|\Delta_{\mathbb{K}}|}}{2^{r_2}\pi^{\rho/2}}}.$

Note that for $-\textup{Re}(\nu)<\t<3/2,$
\begin{align}\label{inversemellinBesselJ}
\frac{1}{2\pi i}\int_{(\t)}\frac{\Gamma((s+\nu)/2)}{2\Gamma((2-s+\nu)/2)}x^{-s}ds&=J_{\nu}(2x),
\end{align}
Here, and in the sequel, we abbreviate $\int_{\t-i\infty}^{\t+i\infty}$ by $\int_{(\t)}$.

We recall an important lemma from \cite[p. 5, Lemma 1]{chanar1}. 
\begin{lemma}\label{chandr}
Let $\sigma_a$ denote the abscissa of absolute convergence for 
\begin{align*}
\phi(s):=\sum_{n=1}^{\infty}\frac{a_n}{\lambda_n^s}.
\end{align*} 
Then for $k \geq 0,~\sigma >0,$ and $\sigma>\sigma_a$, 
\begin{align*}
\frac{1}{\Gamma(k+1)}\sum_{\lambda_n\leq x}{\vphantom{\sum}}'a_n(x-\lambda_n)^k =\frac{1}{2\pi i}\int_{(\sigma)}\frac{\Gamma(s)\phi(s)x^{s+k}}{\Gamma(s+k+1)}ds,
\end{align*}
where the prime $'$ on the summation denotes that if $k=0$ and $x=\lambda_n$ for some positive integer $m$, then we multiply by $1/2$ in $a_n$. 
\end{lemma}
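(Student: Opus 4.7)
The plan is a Mellin--inversion / Perron-type argument: expand $\phi(s)$ as its Dirichlet series on the contour, interchange summation and integration, and evaluate the resulting inner Mellin--Barnes integral in closed form as a truncated power function.

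First, because $\sigma>\sigma_{a}$, the series $\phi(s)=\sum_{n\geq 1} a_{n}\lambda_{n}^{-s}$ converges absolutely on the line $\textup{Re}(s)=\sigma$, and by the Stirling bound \eqref{sterling} the quotient $|\Gamma(s)/\Gamma(s+k+1)|$ decays like $|\textup{Im}(s)|^{-k-1}$ on that line. For $k>0$ the resulting double sum/integral is therefore absolutely convergent, and Fubini gives
\begin{equation*}
\frac{1}{2\pi i}\int_{(\sigma)}\frac{\Gamma(s)\,\phi(s)\,x^{s+k}}{\Gamma(s+k+1)}\,ds
\;=\; x^{k}\sum_{n=1}^{\infty} a_{n}\, I_{k}\!\left(\frac{x}{\lambda_{n}}\right),
\qquad
I_{k}(y):=\frac{1}{2\pi i}\int_{(\sigma)}\frac{\Gamma(s)}{\Gamma(s+k+1)}\,y^{s}\,ds.
\end{equation*}

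Next I would compute $I_{k}(y)$ in closed form. The beta integral
\begin{equation*}
\int_{0}^{1}u^{s-1}(1-u)^{k}\,du \;=\; \frac{\Gamma(s)\,\Gamma(k+1)}{\Gamma(s+k+1)}
\end{equation*}
exhibits $\Gamma(s)/\Gamma(s+k+1)$ as the Mellin transform of the compactly supported function $(1-u)^{k}\mathbf{1}_{[0,1]}(u)/\Gamma(k+1)$, so Mellin inversion on any vertical line in $\textup{Re}(s)>0$ yields
\begin{equation*}
I_{k}(y)=\frac{(1-1/y)_{+}^{k}}{\Gamma(k+1)},
\end{equation*}
namely $0$ for $0<y<1$ and $(y-1)^{k}/(y^{k}\Gamma(k+1))$ for $y>1$. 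An equivalent contour-shift derivation is to push the line to $+\infty$ (no poles, integrand decaying) for $y<1$, and to sweep up the residues of $\Gamma(s)$ at $s=0,-1,-2,\ldots$ for $y>1$, whereupon the binomial series for $(1-1/y)^{k}$ reassembles and matches the expression above.

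Substituting this evaluation back truncates the outer sum to $\lambda_{n}\leq x$ and gives $x^{k}(1-\lambda_{n}/x)^{k}=(x-\lambda_{n})^{k}$, producing exactly the left-hand side. The endpoint $\lambda_{n}=x$ (i.e.\ $y=1$) matters only when $k=0$: there $I_{0}(1)=\tfrac{1}{2}$, the classical discontinuous Perron integral, which is precisely the $\tfrac{1}{2}a_{n}$ convention encoded by the prime on the summation. The main obstacle is the $k=0$ case of the Fubini step, since $\Gamma(s)/\Gamma(s+1)=1/s$ decays only conditionally on the vertical line; this is handled by Perron's truncation trick (cutting the contour at height $T$, letting $T\to\infty$, and bounding the tail via partial summation together with the Dirichlet-series convergence on $\textup{Re}(s)=\sigma$). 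For $k>0$ the issue evaporates and the argument is purely algebraic.
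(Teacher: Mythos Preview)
The paper does not supply its own proof of this lemma: it is simply quoted from Chandrasekharan--Narasimhan \cite[p.~5, Lemma~1]{chanar1}, so there is nothing internal to compare against. Your argument is the standard one (and is essentially what one finds in that reference): expand $\phi(s)$, interchange by Fubini for $k>0$ using the $|t|^{-k-1}$ decay of $\Gamma(s)/\Gamma(s+k+1)$, recognise the inner integral as the inverse Mellin transform of the beta kernel $B(s,k+1)/\Gamma(k+1)$, and fall back on the classical discontinuous Perron integral for $k=0$. The evaluation $I_{k}(y)=(1-1/y)_{+}^{k}/\Gamma(k+1)$ and the endpoint value $I_{0}(1)=\tfrac12$ are correct, and the truncation-at-height-$T$ handling of $k=0$ is the customary remedy for the failure of absolute convergence there. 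In short, your proof is correct and matches the proof in the cited source.
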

We also take into consideration the following version of $\textup{Phragm\'{e}n-Lindel\"{o}f}$ principle \cite[p. 109]{lind}.
\begin{lemma}\label{lind}
Let $f$ be holomorphic in a strip $S$ given by $a<\sigma<b$, $|t|>\eta>0$, and continuous on the boundary. If for some constant $\theta<1$, 
\begin{align*}
f(s)\ll\textup{exp}\left(\theta\pi|s|/(b-a) \right),
\end{align*}
uniformly in $S$, $f(a+it)=\mathrm{o}(1),$ and $f(b+it)=\mathrm{o}(1),$ as $|t|\to \infty$, then 
\begin{align*}
f(\sigma+it) =\mathrm{o}(1)
\end{align*}
uniformly in $S$ as $|t|\to \infty$.
\end{lemma}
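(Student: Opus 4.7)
The plan is to run a two-step Phragm\'{e}n--Lindel\"{o}f argument: first use the subexponential growth hypothesis ($\theta<1$) to promote the boundary bound to a uniform bound $|f|\le M$ on $\bar S$, and then refine this to the $o(1)$ conclusion via Nevanlinna's two-constants (harmonic-measure) theorem on each half-strip $\{a\le\sigma\le b,\ \pm t\ge T\}$.

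For the boundedness step, I would set $W=b-a$, $c=(a+b)/2$, pick any $\omega\in(0,\pi/W)$, and introduce the auxiliary function
$$
g_\delta(s)\;=\;f(s)\exp\!\bigl(-\delta\cos(\omega(s-c))\bigr),\qquad \delta>0.
$$
Writing $s-c=u+iv$, the identity $\operatorname{Re}\cos(\omega(s-c))=\cos(\omega u)\cosh(\omega v)\ge\cos(\omega W/2)\cosh(\omega v)>0$ on $\bar S$ shows that the damping factor has modulus $\le 1$ everywhere, so on each boundary piece (the vertical sides $\sigma=a,b$ with $|t|\ge\eta$, and the short horizontal segments $t=\pm\eta$) one has $|g_\delta|\le|f|\le M$, where $M$ comes from continuity of $f$ together with the hypothesis $f=o(1)$ on the vertical walls. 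In the interior, the single-exponential growth $|f(s)|\ll e^{\theta\pi|s|/W}$ is overwhelmed by the double-exponential decay of $\exp(-\delta\cos(\omega W/2)\cosh(\omega v))$, so $g_\delta(s)\to 0$ as $|v|\to\infty$. Applying the maximum-modulus principle to the rectangles $\{a\le\sigma\le b,\ |t|\le T\}$ and letting $T\to\infty$ yields $|g_\delta|\le M$ on $\bar S$; sending $\delta\to 0^+$ then gives $|f|\le M$ on $\bar S$.

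To upgrade to $o(1)$, I would fix $\varepsilon>0$ and choose $T_1>\eta$ with $|f(a+it)|,|f(b+it)|<\varepsilon$ for $|t|\ge T_1$, and work on the upper half-strip $R_+=\{a\le\sigma\le b,\ t\ge T_1\}$ (the lower one is handled symmetrically). The conformal map
$$
z\;=\;-\cos\!\Bigl(\tfrac{\pi(s-a-iT_1)}{W}\Bigr)
$$
sends $R_+$ biholomorphically onto the upper half-plane $\mathbb H$, carrying the vertical walls $\sigma=a,b$ onto $E:=(-\infty,-1]\cup[1,\infty)$ and the bottom segment $t=T_1$ onto $F:=[-1,1]$. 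The transplanted function $\tilde f(z)=f(s(z))$ is bounded holomorphic on $\mathbb H$, continuous up to $\mathbb R$, with $|\tilde f|\le\varepsilon$ on $E$ and $|\tilde f|\le M$ on $F$, so Nevanlinna's two-constants theorem gives
$$
|\tilde f(z)|\;\le\;\varepsilon^{\omega_E(z)}M^{\omega_F(z)},\qquad \omega_E(z)+\omega_F(z)=1,
$$
where $\omega_F(z)=\tfrac{1}{\pi}\int_{-1}^{1}y\,d\xi/((x-\xi)^2+y^2)$ is the Poisson integral of the indicator of $F$. A direct calculation gives $|z(s)|^2=\cos^2(\pi(\sigma-a)/W)+\sinh^2(\pi(t-T_1)/W)\ge\sinh^2(\pi(t-T_1)/W)$, which tends to $\infty$ uniformly in $\sigma\in[a,b]$ as $t\to\infty$, while $\omega_F(z)=O(1/|z|)$ at infinity in $\mathbb H$. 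Hence $\omega_F(z(s))\to 0$ uniformly in $\sigma$, so $|f(s)|\le\varepsilon^{\omega_E(s)}M^{\omega_F(s)}\to\varepsilon$ uniformly as $t\to\infty$. Since $\varepsilon>0$ was arbitrary, $f(\sigma+it)=o(1)$ uniformly in $S$.

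I expect the main obstacle to be the quantitative harmonic-measure estimate $\omega_F(z)=O(1/|z|)$, which must be made uniform as $z$ ranges over $z(R_+)$, in particular in the degenerate regime near $\sigma=a,b$ where $z$ approaches the real axis. The bounds $\omega_F(z)\le\min(2/(\pi y),\,2y/(\pi(|x|-1)^2))$ together cover the two cases $y\gtrsim|z|$ and $y\ll|z|$; alternatively, at the endpoints $\sigma=a,b$ the image $z$ lies in $E\subset\mathbb R$, where $\omega_F(z)=0$ and the bound $|\tilde f|\le\varepsilon$ propagates trivially. Once this uniform decay is in hand the two-constants theorem closes the argument, with Step 1's damping trick supplying the prerequisite global bound $|f|\le M$.
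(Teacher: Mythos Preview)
The paper does not prove this lemma at all; it is simply quoted from Littlewood \cite{lind} (p.~109) and used as a black box in the proof of Theorem~\ref{maintheorem}. So there is nothing to compare against on the paper's side.

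Your two-step plan --- a damping auxiliary $g_\delta(s)=f(s)\exp\bigl(-\delta\cos(\omega(s-c))\bigr)$ with $\omega<\pi/(b-a)$ to secure a uniform bound, followed by the two-constants theorem on the half-strip conformally mapped to $\mathbb{H}$ --- is the standard route and is correct. Two small cosmetic points. First, since $f$ is only assumed holomorphic for $|t|>\eta$, the rectangles on which you invoke the maximum principle in Step~1 should be $\{a\le\sigma\le b,\ \eta\le t\le T\}$ (and its reflection for $t<-\eta$), not $\{|t|\le T\}$; this changes nothing in the argument. Second, the constraint $\theta<1$ is not actually used sharply in your Step~1: any single-exponential growth is killed by the super-exponential damping $\exp(-\delta'\cosh(\omega v))$, so the hypothesis is stronger than what your proof needs, which is fine. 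As for the harmonic-measure estimate you flag as the main obstacle, it is routine: $\omega_F(z)$ equals $\pi^{-1}$ times the visual angle subtended by $[-1,1]$ at $z$, i.e.\ $\omega_F(z)=\pi^{-1}\arg\dfrac{z+1}{z-1}$, whence $\omega_F(z)=O(y/|z|^2)\le O(1/|z|)$ uniformly on $\overline{\mathbb{H}}\setminus\{\pm1\}$, and your formula $|z(s)|^2=\cos^2\bigl(\pi(\sigma-a)/W\bigr)+\sinh^2\bigl(\pi(t-T_1)/W\bigr)$ already gives $|z(s)|\to\infty$ uniformly in $\sigma\in[a,b]$ as $t\to\infty$.
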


We conclude this section by recalling the asymptotic formula for $J_{\nu}(z)$ \cite[p. 199]{watson}, namely, 
\begin{align}\label{Besseljasym}
J_{\nu}(z)\sim\left(\frac{2}{\pi z} \right)^{1/2}\left( \cos w\sum_{n=0}^{\infty}\frac{(-1)^n(\nu,2n)}{(2z)^{2n}}-\sin w\sum_{n=0}^{\infty}\frac{(-1)^n(\nu,2n+1)}{(2z)^{2n+1}}\right),
\end{align}
where $w=z-\frac{1}{2}\pi\nu-\frac{\pi}{4}$.
\section{Proof of the main theorem}
We begin this section by deriving a crucial lemma, which shows that the behaviour of $\mathbb{J}_{\nu}(\a,x)$ is essentially depends on the nature of the Bessel function of the first kind $J_{\nu}(x)$.
\begin{lemma}\label{JJ}
For $x>0,$ and $k >\max\left\{\rho|1-\textup{Re}(\a)|-1/2,r_1-3/2\right\}$,
\begin{align*}
&\mathbb{J}_{1+k}(\a,2^{\rho}\sqrt{x})=2^{k(\rho-1)}x^{(k+1)/2}\Bigg( A^* x^{\frac{-k-\rho(1+\a)}{2\rho}}J_{k+\rho-2r-1}\left(2(xe^{-\Theta})^{\frac{1}{2\rho}}\right)+O\left( x^{\frac{-k-\rho(1+\textup{Re}(\a))-3/2}{2\rho}}\right)\Bigg).
\end{align*}
where
\begin{align*}
A^* &=-\frac{e^B\left(e^{-\Theta/2\rho} \right)^{-k-\rho(1+\a)}}{\rho},\\
B&=-r_1(\a+1)\log\frac{1}{2}-(\mu -\l)\log \rho,\\
\Theta&= 2r_1\log\frac{1}{2}-2\rho  \log \rho.
\end{align*}
\end{lemma}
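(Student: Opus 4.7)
The plan is to evaluate the Mellin--Barnes integral defining $\mathbb{J}_{1+k}(\a,2^\rho\sqrt{x})$ asymptotically, recognizing its leading contribution as a Bessel function $J_{k+\rho-2r-1}$. First, substitute $x\mapsto 2^\rho\sqrt{x}$ into \eqref{DefJ} with $\nu=1+k$ and simplify the prefactor $2^{2\rho s-\nu-\rho+1}$ against $(2^\rho\sqrt{x})^{-2s}$; the outer $x$-powers collapse to give
\begin{align*}
\mathbb{J}_{1+k}(\a,2^\rho\sqrt{x}) = 2^{k(\rho-1)}x^{(k+1)/2}\cdot\frac{1}{2\pi i}\int_{(c)}\frac{\Gamma(1-s)\,G(1-s)\,G(1-s+\a)}{\Gamma(k+2-s)}\,x^{-s}\,ds,
\end{align*}
which already isolates the common prefactor $2^{k(\rho-1)}x^{(k+1)/2}$ from the statement. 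The task reduces to showing that the remaining Mellin integral equals a scalar multiple of a Bessel function, plus a smaller error.

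Second, I would simplify $G(1-s)G(1-s+\a)$. Applying the reflection identity $\Gamma(z)\Gamma(1-z)=\pi/\sin(\pi z)$ and the Legendre duplication $\Gamma(z)\Gamma(z+1/2)=\sqrt{\pi}\,2^{1-2z}\Gamma(2z)$ to every Gamma factor of $G$ yields
\begin{align*}
G(1-s)G(1-s+\a) = \frac{2^{r_1(2-2s+\a)}}{\pi^\rho}\,\Gamma(s)^\rho\,\Gamma(s-\a)^\rho\,T(s),
\end{align*}
with $T(s)=\cos^{r_1}(\pi s/2)\cos^{r_1}(\pi(s-\a)/2)\sin^{r_2}(\pi s)\sin^{r_2}(\pi(s-\a))$. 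Then the Gauss multiplication formula, in its Stirling-compatible form $\Gamma(s)^\rho=(2\pi)^{(\rho-1)/2}\rho^{1/2-\rho s}s^{-(\rho-1)/2}\Gamma(\rho s)(1+O(|s|^{-1}))$, together with its analogue for $\Gamma(s-\a)^\rho$ and one more application of duplication to $\Gamma(\rho s)^2$, consolidates these four Gamma factors (modulo $O(|s|^{-1})$) into a single $\Gamma(2\rho s)$ multiplied by explicit powers of $2$, $\rho$, and $s$. A further pass of reflection absorbs $T(s)$ and $\Gamma(1-s)$ into the denominator $\Gamma(k+2-s)$, leaving a clean ratio $\Gamma(\cdot)/\Gamma(\cdot)$ of Bessel--Mellin shape.

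Third, after the change of variable $u=2\rho s$, the leading integrand matches the kernel of \eqref{inversemellinBesselJ} with order $\nu=k+\rho-2r-1$ (noting $\rho-2r=2-r_1$) and argument $y=(xe^{-\Theta})^{1/(2\rho)}$. The accumulated powers of $2$, $\pi$, and $\rho$ combine into the constant $A^*$, while the extra factor $s^{1/2-\rho(1+\a)}$ coming from the multiplication/duplication steps, translated through the change of variable, produces the prefactor $x^{(-k-\rho(1+\a))/(2\rho)}$ that multiplies the Bessel function. Simultaneously, the combination of $2^{r_1(2-2s+\a)}$ with $\rho^{-2\rho s}$ accumulated against $x^{-s}$ dictates the shift $\Theta=2r_1\log(1/2)-2\rho\log\rho$ in the Bessel argument.

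For the error, the subleading $O(|s|^{-1})$ term in Stirling's expansion, carried through the same change of variable and integrated against $x^{-s}$, contributes precisely the claimed $O(x^{(-k-\rho(1+\Re(\a))-3/2)/(2\rho)})$. The hypothesis $k>\rho|1-\Re(\a)|-1/2$ is what places the contour inside the fundamental strip of both the Bessel--Mellin kernel and the original integrand, while $k>r_1-3/2$ is needed to control the trigonometric factors $T(s)$ in the remainder estimate (the extra $|t|^{r_1/2}$ coming from the cosines). The main obstacle is the patient bookkeeping of the cascading powers of $2$, $\pi$, $\rho$ and half-integer shifts circulating through the reflection, duplication, and multiplication formulas; matching these precisely so that both $A^*$ and the exponential shift $e^{-\Theta}$ emerge with the stated values is the delicate part of the argument, while recognizing the Bessel function once the integrand is in single-$\Gamma$-ratio form and obtaining the error bound are then routine.
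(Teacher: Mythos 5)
Your high-level plan --- replace the product of Gamma factors in the Mellin--Barnes integrand by a single kernel of Bessel--Mellin shape, up to a multiplicative $1+O(1/|s|)$, and then evaluate that kernel as a $J$-Bessel function --- is the same as the paper's, which carries out the comparison by applying Stirling's series to $\log F_{\alpha}(s)$ and matching it against $\log\bigl(\Gamma(\rho s+\mu)e^{\Theta s}/\Gamma(\lambda-\rho s)\bigr)$. However, two of your steps do not go through as described. First, after reflection and duplication you arrive at $\Gamma(s)^{\rho}\Gamma(s-\alpha)^{\rho}T(s)$ with $T(s)$ growing like $e^{\rho\pi|t|}$ on vertical lines; no ``further pass of reflection'' can absorb $T(s)$ and $\Gamma(1-s)$ into the denominator $\Gamma(k+2-s)$ --- reflecting the sines and cosines back simply reconstitutes the $G$-factors you started from. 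That exponential growth can only be cancelled against the Stirling decay of the Gamma powers asymptotically, which is precisely the log-comparison you were trying to bypass. Moreover, consolidating everything into a single $\Gamma(2\rho s)$ and substituting $u=2\rho s$ lands you in the wrong shape: the correct approximant is $\Gamma(\rho s+\mu)/\Gamma(\lambda-\rho s)$, with numerator and denominator each of weight $\rho$, evaluated through $u=\rho s+\mu$; this is how the Bessel order $k+\rho-2r-1=\lambda+\mu-1$ and the argument $2(xe^{-\Theta})^{1/(2\rho)}$ actually arise.

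Second, your error estimate falls short of the claimed bound. On the contour $\textup{Re}(s)=c=\frac{k+1+\rho(1+\textup{Re}(\alpha))}{2\rho}-\epsilon$, integrating the remainder $H_{\alpha}(s)\,O(1/|s|)\,x^{-s}$ gives only $O(x^{-c})$, which exceeds the asserted $O\bigl(x^{(-k-\rho(1+\textup{Re}(\alpha))-3/2)/(2\rho)}\bigr)$ by the positive power $x^{1/(4\rho)+\epsilon}$. The paper obtains the stated exponent by shifting this remainder integral to the right by $1/(2\rho)$, and that shift is where the hypothesis $k>r_1-3/2$ is really used: it guarantees $-\textup{Re}(\mu)/\rho<c$, so the poles of $\Gamma(\rho s+\mu)$ lie to the left of both contours and the shift, as well as the Bessel--Mellin evaluation, is legitimate. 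Your attribution of $k>r_1-3/2$ to controlling the cosine factors in the remainder is therefore not the right reason. Once the asymptotic matching is done via the Stirling comparison and the contour shift is inserted, your argument coincides with the paper's.
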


\begin{proof}
From \eqref{DefJ}, we have, for $c>\max\{1,\textup{Re}(\a)\}$
\begin{align}\label{JJ}
x^{-\nu}\mathbb{J}_{\nu}(\a,x):=\frac{1}{2\pi i} \int_{c-i\infty}^{c+i\infty}\frac{2^{(2\rho s -\nu-\rho+1)}\Gamma(1-s)G(1-s)G(1-s+\a)}{\Gamma(\nu+1-s)}x^{-2s}ds.
\end{align}

Let's first define
\begin{align}\label{Ir1r2}
I_{r_1,r_2,k}(x):= \frac{1}{2\pi i}\int_{c-i\infty}^{c+i\infty}\frac{\Gamma(1-s)G(1-s)G(1-s+\a)}{\Gamma(2-s+k)}x^{-s}ds. 
\end{align}
Then we get
\begin{align}\label{JIrelation}
\mathbb{J}_{1+k}(\a,2^{\rho}\sqrt{x})=2^{k(\rho-1)}x^{(1+k)/2} I_{r_1,r_2,k}(x).
\end{align}

If $\mathbb{K}=\mathbb{Q}$, then  $\mathbb{J}_{\nu}(\a,x)$ reduces to a combination of Bessel functions $Y_{\nu}(x)$ and $K_{\nu}(x)$, and so with the help of the asymptotics of Bessel functions we can estimate its behaviour. On the other hand, the integrand is a quotient of multiple gamma factors. Therefore, in the case of general number fields we cannot always evaluate the line integral explicitly in terms of special functions. Hence, it's difficult to examine the behaviour of $\mathbb{J}_{\nu}(\a, x)$. However, the line integral with multiple gamma factors has been studied by Chandrasekharan and Narasimhan \cite{chanar2}. Hence the analogous ways and means can also be applied here, but now in a more general setting. To do so, let's consider, for any constant $c$, Stirling's approximation for $\log \Gamma(z)$ as $|z| \to \infty$, namely,
\begin{align}\label{sterlinglog}
\log \Gamma(z+c)=\left(z+c-\frac{1}{2}\right)\log z -z+\frac{1}{2}\log 2\pi +O\left( \frac{1}{|z|}\right).
\end{align}
Now we examine the asymptotic expansion of 
\begin{align}\label{Fas}
\log F_\a(s):=\log \frac{\Gamma(1-s)G(1-s)G(1-s+\a)}{\Gamma(2-s+k)},
\end{align}
using \eqref{sterlinglog}. As $|s|\to \infty$, we have
\begin{align}\label{sk}
\log\frac{\Gamma(1-s)}{\Gamma(2-s+k)}&=-(k+1)\log (-s)+O\left( \frac{1}{|s|}\right). 
\end{align}
From the definition \eqref{G(s)},
\begin{align}\label{1234}
\log G(1-s+\a)= \log \left(\frac{\Gamma^{r_1}\left(\frac{s-\a}{2}\right)}{\Gamma^{r_1}\left(\frac{1-s+\a}{2}\right)}\frac{\Gamma^{r_2}(s-\a)}{\Gamma^{r_2}(1-s+\a)}\right)=r_1\log \frac{\Gamma\left(\frac{s-\a}{2}\right)}{\Gamma\left(\frac{1-s+\a}{2}\right)}+r_2\log \frac{\Gamma(s-\a)}{\Gamma(1-s+\a)}. 
\end{align}
Employ \eqref{sterlinglog} on the right-hand side of \eqref{1234}, to see that, as $|s|\to \infty$,  
\begin{align*}
\log \frac{\Gamma\left(\frac{s-\a}{2}\right)}{\Gamma\left(\frac{1-s+\a}{2}\right)}&=\left(\frac{s-\a}{2}-\frac{1}{2} \right)\log\left(\frac{s}{2} \right)-\left(\frac{1-s+\a}{2}-\frac{1}{2} \right)\log\left(-\frac{s}{2} \right)-s+O\left( \frac{1}{|s|}\right)\\
&\qquad+\left(s-\a-\frac{1}{2} \right)\log\left(\frac{1}{2} \right)-s+O\left( \frac{1}{|s|}\right),
\end{align*}
and 
\begin{align*}
\log \frac{\Gamma(s-\a)}{\Gamma(1-s+\a)}&=\left(s-\a-\frac{1}{2} \right)\log\left(s\right)-\left(1-s+\a-\frac{1}{2} \right)\log\left(-s \right)-2s+O\left( \frac{1}{|s|}\right).
\end{align*}
Hence, we find
\begin{align}\label{1sa}
\log G(1-s+\a)&=r_1\log \frac{\Gamma\left(\frac{s-\a}{2}\right)}{\Gamma\left(\frac{1-s+\a}{2}\right)}+r_2\log \frac{\Gamma(s-\a)}{\Gamma(1-s+\a)}\nonumber\\
&=\left(\frac{\rho}{2}(s-\a)-\frac{r}{2}-\frac{1}{2} \right)\log\left(s \right)-\left(\frac{\rho}{2}(1-s+\a)-\frac{r}{2}-\frac{1}{2} \right)\log\left(-s \right)\nonumber\\
&\qquad+r_1\left(s-\a-\frac{1}{2} \right)\log\left(\frac{1}{2} \right)-\rho s+O\left( \frac{1}{|s|}\right); ~as~|s|\to \infty.
\end{align}
If we take $\a=0$ in \eqref{1sa}, we have, for $|s|\to \infty$,
\begin{align}\label{1s}
\log G(1-s)&=r_1\log \frac{\Gamma\left(\frac{s}{2}\right)}{\Gamma\left(\frac{1-s}{2}\right)}+r_2\log \frac{\Gamma(s)}{\Gamma(1-s)}\nonumber\\
&=\left(\frac{\rho}{2}s-\frac{r}{2}-\frac{1}{2} \right)\log\left(s \right)-\left(\frac{\rho}{2}(1-s)-\frac{r}{2}-\frac{1}{2} \right)\log\left(-s \right)\nonumber\\
&\qquad \qquad\qquad\qquad+r_1\left(s-\frac{1}{2} \right)\log\left(\frac{1}{2} \right)-\rho s+O\left( \frac{1}{|s|}\right).
\end{align}
Now from \eqref{Fas}, \eqref{sk}, \eqref{1sa}, and \eqref{1s}, for $|s|\to \infty$
\begin{align}
\log F_\a(s)
&=\left(\rho s+\mu -\frac{1}{2}\right)\log s -\left(\rho(1-s)+\mu +\rho \a +1+k-\frac{1}{2}\right)\log\left(-s \right)\nonumber\\
&\hspace{3cm}+2\nu s-r_1\left(\a+1\right)\log\left(\frac{1}{2} \right)+O\left( \frac{1}{|s|}\right),
\end{align}
where
\begin{align*}
\mu&=-\frac{\rho}{2}\a-r-\frac{1}{2},\\
\nu&=r_1\log \frac{1}{2}-\rho.
\end{align*}
If we define $\l:= \rho(1+\a)+\mu+1+k$, then
as $|s|\to \infty$, 
\begin{align}\label{asymAalpha}
\log F_\a(s)&=\left(\rho s+\mu -\frac{1}{2}\right)\log s -\left(-\rho s +\l-\frac{1}{2}\right)\log\left(-s \right)+2\nu s-r_1\left(\a+1\right)\log\left(\frac{1}{2} \right)+O\left( \frac{1}{|s|}\right).
\end{align}

Now let us consider the following function: 
\begin{align}\label{Halpha}
\log \frac{\Gamma(\rho s+\mu)}{\Gamma(\l -\rho s)}e^{\Theta s},
\end{align}
where 
\begin{align}
\l&= \rho(1+\a)+\mu+1+k,\\
\Theta&= 2\rho +2\nu-2\rho \log \rho.
\end{align}
Then by an application of the Stirling approximation \eqref{sterlinglog} in \eqref{Halpha}, and \eqref{asymAalpha}, we note that, as  $|s|\to \infty$,
\begin{align}\label{logfaplha}
\log F_\a(s)- \log \frac{\Gamma(\rho s+\mu)}{\Gamma(\l -\rho s)}e^{\Theta  s}= B +O\left( \frac{1}{|s|}\right),
\end{align}
where $$B=-r_1(\a+1)\log\frac{1}{2}-(\mu -\l)\log \rho.$$

If we define
\begin{align}\label{halphadef}
H_\a(s):=\frac{\Gamma(\rho s+\mu)}{\Gamma(\l -\rho s)}e^{\Theta s +B},
\end{align}
then, from \eqref{logfaplha} and \eqref{halphadef},
\begin{align}\label{FminusH}
F_\a(s)-H_\a(s)= H_{\a}(s)O\left( \frac{1}{|s|}\right).
\end{align}
Hence, 
\begin{align}\label{integralFalpha}
\frac{1}{2\pi i}\int_{(c)}F_{\a}(s)x^{-s}ds =\frac{1}{2\pi i}\int_{(c)}\left(F_{\a}(s)-H_{\a}(s)\right)x^{-s}ds +\frac{1}{2\pi i}\int_{(c)}H_{\a}(s)x^{-s}ds,
\end{align}
where, we take $c=\frac{k+1+\rho(1+\textup{Re}(\a))}{2\rho}-\e$, with $0<\e<\frac{1}{4\rho}$. Observe that we can choose such a $c$, only if $k>\rho|1-\textup{Re}(\a)|-1/2$.

Also note that the inequality, $-\frac{\textup{Re}(\mu)}{\rho}<c$ is true, provided $k>r_{1}-\frac{3}{2}$. Thus, the possible poles of the integrand are on the left side of the line, at $c.$
Now, by the analyticity of the integrand and \eqref{FminusH}, we can shift the integral by $1/2\rho$ to the right. Hence we have
\begin{align}\label{firstintegral}
\frac{1}{2\pi i}\int_{(c)}\left(F_{\a}(s)-H_{\a}(s)\right)x^{-s}ds&=\frac{1}{2\pi i}\int_{(c+\frac{1}{2\rho})}H_{\a}(s)O\left( \frac{1}{|s|}\right)x^{-s}ds\nonumber\\
&=O\left( x^{-\frac{k+1+\rho(1+\textup{Re}(\a))}{2\rho}+\e-\frac{1}{2\rho}}\right)\nonumber\\
&=O\left( x^{\frac{1}{2\rho}\left(-k-\rho(1+\textup{Re}(\a))-\frac{3}{2}\right)}\right),
\end{align}
and 
\begin{align*}
\int_{(c)}H_{\a}(s)x^{-s}ds&=\int_{(c)}\frac{\Gamma(\rho s+\mu)}{\Gamma(\l -\rho s)}e^{\Theta s +B}x^{-s}ds\\
&=\frac{e^{B}(xe^{-\Theta})^{\frac{\mu}{\rho}}}{\rho}\int_{(\rho c+\mu)}\frac{\Gamma(s)}{\Gamma(\l +\mu -s)}\left((xe^{-\Theta})^{\frac{1}{2\rho}}\right)^{-2s}ds.
\end{align*}
We now invoke the following version of \eqref{inversemellinBesselJ}:
\begin{align*}
\frac{1}{2\pi i}\int_{(\t)}\frac{\Gamma(s)}{\Gamma(\nu -s +1)}\left(x^2\right)^{-s}ds&=\frac{J_{\nu}(2x)}{x^{\nu}}, ~0<\t<3/4+\textup{Re}(\nu)/2
\end{align*}
to obtain
\begin{align}\label{secondintegral}
\frac{1}{2\pi i}\int_{(c)}H_{\a}(s)x^{-s}ds&=-\frac{e^{B}(xe^{-\Theta})^{\frac{\mu}{\rho}}}{\rho}\frac{1}{2\pi i}\int_{(\rho c+\mu)}\frac{\Gamma(s)}{\Gamma(\l +\mu -s)}\left((xe^{-\Theta})^{\frac{1}{2\rho}}\right)^{-2s}ds\nonumber\\
&=-\frac{e^{B}(xe^{-\Theta})^{\frac{\mu}{\rho}}}{\rho}\frac{J_{\l+\mu-1}\left(2(xe^{-\Theta})^{\frac{1}{2\rho}}\right)}{\left((xe^{-\Theta})^{\frac{1}{2\rho}}\right)^{\l+\mu-1}}\nonumber\\
&=-\frac{e^{B}\left((xe^{-\Theta})^{\frac{1}{2\rho}}\right)^{\mu-\l+1}}{\rho}J_{\l+\mu-1}\left(2(xe^{-\Theta})^{\frac{1}{2\rho}}\right)\nonumber\\
&=A^* x^{\frac{-k-\rho(1+\a)}{2\rho}}J_{k+\rho-2r-1}\left(2(xe^{-\Theta})^{\frac{1}{2\rho}}\right),
\end{align}
where in the last step we use the definition of $\l$, $\mu$, and $A^*$.

Note that the above evaluation is true only when
\begin{align*}
-\frac{\textup{Re}(\mu)}{\rho}<~ c~<\frac{1}{2\rho}\left(\frac{3}{2}+\rho(1+\textup{Re}(\a))+k\right),
\end{align*}
and our choice of $c$ in \eqref{integralFalpha} clearly satisfies the above condition.
Hence from \eqref{integralFalpha}, \eqref{firstintegral} and \eqref{secondintegral},
\begin{align}\label{Jbigo}
I_{r_1,r_2,k}(x):= \int_{c-i\infty}^{c+i\infty}&\frac{\Gamma(1-s)G(1-s)G(1-s+\a)}{\Gamma(2-s+k)}x^{-s}ds\nonumber\\
&=A^* x^{\frac{-k-\rho(1+\a)}{2\rho}}J_{k+\rho-2r-1}\left(2(xe^{-\Theta})^{\frac{1}{2\rho}}\right)+O\left( x^{\frac{1}{2\rho}\left(-k-\rho(1+\textup{Re}(\a))-\frac{3}{2}\right)}\right),
\end{align}
for $k >\max\left\{\rho|1-\textup{Re}(\a)|-1/2,r_1-3/2\right\}$. The lemma follows from the above equation and \eqref{JIrelation}.
\end{proof}

\begin{proof}[Theorem \textup{\ref{maintheorem}}][]
Let $a_n =\sigma_{\mathbb{K},-\alpha}(n)$ in Lemma \ref{chandr} to obtain 
\begin{align}
\frac{1}{\Gamma(k+1)}\sum_{n \leq x}{\vphantom{\sum}}'\sigma_{\mathbb{K},-\alpha}(n)(x-n)^k =\frac{1}{2\pi i}\int_{(\sigma)}\frac{\Gamma(s)\zeta_{\mathbb{K}}(s)\zeta_{\mathbb{K}}(s+\alpha)x^{s+k}}{\Gamma(s+k+1)}ds.
\end{align}
We define 
\begin{align}
\omega(s):=\frac{\Gamma(s)\zeta_{\mathbb{K}}(s)\zeta_{\mathbb{K}}(s+\alpha)x^{s+k}}{\Gamma(s+k+1)}.
\end{align}
Here we construct a contour defined by the line segment $[\s-iT, \s+iT]$, $[\s+iT, 1-\s+iT]$, $[1-\s+iT, 1-\s-iT]$, $[1-\s-iT, \s-iT]$, where $\s> \max\{1, 1\pm \textup{Re}(\a), \textup{Re}(\a)\}$, due to which $\omega(s)$ has three poles inside the contour, namely, a simple pole at zero of $\Gamma(s)$, a simple pole at $1$ of $\zeta_{\mathbb{K}}(s)$, and a simple pole at $s=1-\a$ due to $\zeta_{\mathbb{K}}(s+\a)$. Then by Cauchy's residue theorem,\begin{align}
\frac{1}{2\pi i}\left(\int_{\sigma-i T}^{\s+ i T}+\int_{\s+i T}^{1-\s +i T}\int_{1-\s +i T}^{1-\s -i T}+\int_{1-\s -i T}^{\s -i T} \right)\omega(s)ds=R_{0}+R_{1}+R_{1-\alpha},
\end{align}
where $R_{a}$ denote the residue of $\omega(s)$ at $s=a$, thus
\begin{align*}
R_{0}&=\lim_{s \to 0}s~\omega(s)=\frac{\zeta_{\mathbb{K}}(0)\zeta_{\mathbb{K}}(\a)x^k}{k!}, \\
R_{1}&=\lim_{s \to 1}(s-1)~\omega(s)=\frac{2^{r+1}\pi^{r_2}R\cdot h}{\omega\sqrt{|\Delta_{\mathbb{K}}|}}\frac{\zeta_{\mathbb{K}}(1+\a)}{(k+1)!}x^{k+1},\\
R_{1-\a}&=\lim_{s \to 1-\a}(s-1+\a)~\omega(s)=\frac{2^{r+1}\pi^{r_2}R\cdot h}{\omega\sqrt{|\Delta_{\mathbb{K}}|}}\frac{\Gamma(1-\a)\zeta_{\mathbb{K}}(1-\a)}{\Gamma(2+k-\a)}x^{k+1-\a}.
\end{align*}

Next we will show that the limits of the integrals along the horizontal lines are  zero, namely, 
\begin{align}
\int_{\sigma\pm iT}^{(1-\sigma)\pm iT}\omega(s)\ ds \to 0 ~ as ~|T|\to \infty.
\end{align}

If we take $s=c+iT$, where $1-\s\leq c \leq \s$, then we have, from $\eqref{sterling}$, 
\begin{align}\label{uniformgamma}
\frac{\Gamma(s)}{\Gamma(s+k+1)}= O\left(|T|^{-k-1}\right).
\end{align} 

At $s =\s \pm i T$, where $\s>\max\{1, 1\pm \textup{Re}(\a), \textup{Re}(\a)\}$, we know that $\zeta_{\mathbb{K}}(s)\zeta_{\mathbb{K}}(s+\a)$ is bounded, and along with \eqref{uniformgamma}, we obtain
\begin{align}\label{atsigma}
\omega(\s+iT)=\frac{\Gamma(s)\zeta_{\mathbb{K}}(s)\zeta_{\mathbb{K}}(s+\alpha)x^{s+k}}{\Gamma(s+k+1)} =\mathrm{o}(1),~ as~ |T|\to \infty.
\end{align}

But at $s =1-\s \pm iT$, we use the functional equation for $\zeta_{\mathbb{K}}(s)\zeta_{\mathbb{K}}(s+\a)$, namely, 
\begin{align}\label{functional2}
\zeta_{\mathbb{K}}(s)\zeta_{\mathbb{K}}(s+\a) =A^{2-4s-2\a}G(s)G(s+\a)\zeta_{\mathbb{K}}(1-s)\zeta_{\mathbb{K}}(1-s-\a).
\end{align}
Rademacher \cite{Rade} gave the following convexity bound for the Dedekind zeta function, namely,  as $|T|\to \infty$, 
\begin{align}\label{Rade}
\zeta_{\mathbb{K}}(\t+iT)  =
\left\{
	\begin{array}{ll}
		O\left(|T|^{\rho\left(\frac{1}{2}-\t\right)} \right)  & \t \leq 0 \\
		O\left(|T|^{\frac{\rho}{2}(1-\t)} \right)  & 0\leq \t \leq 1 \\
		O\left(1 \right) & \t\geq 1 \\
	\end{array}
\right.
\end{align}

Employing \eqref{uniformgamma}, and Stirling bound \eqref{sterling} in \eqref{atsigma}, we get 
\begin{align}
\omega(1-\s+i T) =\frac{\Gamma(s)\zeta_{\mathbb{K}}(s)\zeta_{\mathbb{K}}(s+\alpha)x^{s+k}}{\Gamma(s+k+1)} =O\left(|T|^{\rho(2\s -\textup{Re}(\a) -1)-k-1} \right)=\mathrm{o}(1),~ as~ |T|\to \infty, 
\end{align}
when $k>\rho(2\s -\textup{Re}(\a) -1) -1.$

From \eqref{Rade}, we get a convexity bound for Dedekind zeta function in the strip $1-\s\leq \t \leq \s$, namely, 
\begin{align}\label{Rade4}
\zeta_{\mathbb{K}}(\t+i T) = O \left( |T|^{\frac{\rho}{2}\left( \s -\t\right)}\right).
\end{align}
We invoke Lemma \ref{lind} and use \eqref{uniformgamma} and \eqref{Rade4} in the strip  $1-\s\leq \t \leq \s$, to obtain 
\begin{align}
\omega(s)=\frac{\Gamma(s)\zeta_{\mathbb{K}}(s)\zeta_{\mathbb{K}}(s+\alpha)x^{s+k}}{\Gamma(s+k+1)} =O\left(\textup{exp}\left(C|s|\log|s| \right) \right),
\end{align}
for some constant $C$ as $|T|\to \infty.$ Hereby, we deduce that 
\begin{align*}
\frac{\Gamma(s)\zeta_{\mathbb{K}}(s)\zeta_{\mathbb{K}}(s+\alpha)x^{s+k}}{\Gamma(s+k+1)} =\mathrm{o}(1),
\end{align*}
uniformly for $1-\s\leq\t\leq\s$ and $|T|\to \infty.$
Hence, by applying Lemma \ref{lind}, we get
\begin{align}
\int_{\sigma\pm iT}^{(1-\sigma)\pm iT}\omega(s)\ ds \to 0, ~ as ~|T|\to \infty.
\end{align}
Thus we have shown that for $k>\rho(2\s -\textup{Re}(\a) -1) -1$,
\begin{align}\label{finalcauchy}
\frac{1}{2\pi i}\int_{\sigma-i \infty}^{\s+i \infty}\omega(s)ds=R_{0}+R_{1}+R_{1-\alpha}+\frac{1}{2\pi i}\int_{1-\sigma-i\infty}^{1-\s+i\infty}\omega(s)ds.
\end{align}

Now we turn our attention towards computing the line integral on the right-hand side of \eqref{finalcauchy}. To do that, we use functional equation of $\zeta_{\mathbb{K}}(s)\zeta_{\mathbb{K}}(s+\alpha)$ from \eqref{functional2} to get
\begin{align*}
\int_{1-\sigma-i\infty}^{1-\s+i\infty}\omega(s)ds&= \int_{1-\sigma-i\infty}^{1-\s+i\infty}\frac{\Gamma(s)\zeta_{\mathbb{K}}(s)\zeta_{\mathbb{K}}(s+\alpha)x^{s+k}}{\Gamma(s+k+1)}\ ds \\
&=A^{2(1-\a)}x^k\int_{1-\sigma-i\infty}^{1-\s+i\infty}\frac{\Gamma(s)G(s)G(s+\a)\zeta_{\mathbb{K}}(1-s)\zeta_{\mathbb{K}}(1-s-\a)}{\Gamma(s+k+1)}\left(\frac{x}{A^4}\right)^{s}\ ds \\
&=A^{2(1-\a)}x^k\sum_{n=1}^{\infty}\frac{\s_{\mathbb{K},\a}(n)}{n}\int_{1-\sigma-i\infty}^{1-\s+i\infty}\frac{\Gamma(s)G(s)G(s+\a)}{\Gamma(s+k+1)}\left(\frac{nx}{A^4}\right)^{s}\ ds.
\end{align*}
Upon employing the change of variable $s \to 1-s$, we obtain, for $k>\rho(2\s -\textup{Re}(\a) -1) -1$,
\begin{align}
\int_{1-\sigma-i\infty}^{1-\s+i\infty}\omega(s)ds&=\frac{x^{1+k}}{A^{2(1+\a)}}\sum_{n=1}^{\infty}\s_{\mathbb{K},\a}(n)\int_{\sigma-i\infty}^{\s+i\infty}\frac{\Gamma(1-s)G(1-s)G(1-s+\a)}{\Gamma(2-s+k)}\left(\frac{nx}{A^4}\right)^{-s}ds \nonumber\\
&=\frac{2^{(k+\rho)}x^{1+k}}{A^{2(1+\a)}}\sum_{n=1}^{\infty}\s_{\mathbb{K},\a}(n)\left(2^{\rho}\sqrt{\frac{nx}{A^4}}\right)^{-(1+k)}\mathbb{J}_{1+k}\left(\a,2^{\rho}\sqrt{\frac{nx}{A^4}}\right)\nonumber\\
&=\frac{2^{k(1-\rho)}}{A^{2(1+\a)}}\sum_{n=1}^{\infty}\s_{\mathbb{K},\a}(n)\left(\frac{A^4x}{n}\right)^{(1+k)/2}\mathbb{J}_{1+k}\left(\a,2^{\rho}\sqrt{\frac{nx}{A^4}}\right), 
\end{align}
where $\mathbb{J}_{\nu}(\a,x)$ is defined in $\eqref{DefJ}.$
Now with the help of Lemma \ref{JJ} we find the region of absolute convergence of the series 
\begin{align}\label{series}
\sum_{n=1}^{\infty}\s_{\mathbb{K},\a}(n)\left(\frac{A^4x}{n}\right)^{(1+k)/2}\mathbb{J}_{1+k}\left(\a,2^{\rho}\sqrt{\frac{nx}{A^4}}\right).
\end{align}
From Lemma \ref{JJ} and \eqref{Besseljasym} we know that
\begin{align*}
&\mathbb{J}_{1+k}\left(\a,2^{\rho}\sqrt{\frac{nx}{A^4}}\right)=O\left(\left(\frac{n x}{A^4}\right)^{\frac{2k(\rho-1)-2\rho\textup{Re}(\a)-1}{4\rho}}\right),
\end{align*}
provided $k>\max\left\{\rho|1-\textup{Re}(\a)|-1/2,r_1-3/2\right\}$. Thus, for $x>0$, we have
\begin{align}\label{O}
\left|\sum_{n=1}^{\infty}\s_{\mathbb{K},\a}(n)\left(\frac{A^4x}{n}\right)^{(1+k)/2}\mathbb{J}_{1+k}\left(\a,2^{\rho}\sqrt{\frac{nx}{A^4}}\right) \right|&\ll_{\rho,A} x^{\frac{(2\rho-1)k+\rho\left(1-\textup{Re}(\a)\right)-1/2}{2\rho}}\sum_{n=1}^{\infty}\frac{\s_{\mathbb{K},\textup{Re}(\a)}(n)}{n^{\frac{k+\rho(1+\textup{Re}(\a))+1/2}{2\rho}}}\nonumber\\
&\ll_{\rho,A} \left( x^{\frac{(2\rho-1)k+\rho\left(1-\textup{Re}(\a)\right)-1/2}{2\rho}}\right),
\end{align}
whenever $k>\rho\left(1+|\textup{Re}(\a)|\right)-\frac{1}{2}$.  This implies that the series on the right-hand side of Theorem \ref{maintheorem} is absolutely and uniformly convergent in $k>\rho\left(1+|\textup{Re}(\a)|\right)-\frac{1}{2}>\max\left\{\rho|1-\textup{Re}(\a)|-1/2,r_1-3/2\right\}$. In view of \eqref{diff} and after a suitable number of differentiations, we can confirm the validity of \eqref{maintheoremeqn} for  $k>\rho\left(1+|\textup{Re}(\a)|\right)-\frac{1}{2}>0.$ 
\end{proof}

\begin{proof}[Corollary \textup{\ref{maintheoremO}}][]
Proof is immediate from \eqref{O}.
\end{proof}

\section{Convergence Theorem}
In this section we wish to extend the region of validity of Theorem \ref{maintheorem}. The method we adopt here uses the convergence results given in Chandrasekharan and Narasimhan \cite{chanar1}, Dixit et al. \cite{DB},  and Berndt \cite{berndt}, which, in turn, are consequences of some results of Zygmund \cite{Zygmund}, \cite[Lemma 8]{chanar1}.

Here we note that  $\sum_{n=-\infty}^{\infty}a_n(x)$ and $\sum_{n=-\infty}^{\infty}b_n(x)$ are \textit{uniformly equi-convergent} on an interval, provided the sum $\sum_{j=-n}^{n}[a_j(x)-b_{j}(x)]$ converges uniformly on that interval as $n \to \infty.$

Next, we recall two lemmas from \cite[Corollary 1 and 2 ]{chanar1}. 
\begin{lemma}\label{1}
Let $a_n$ be a positive strictly increasing sequence of numbers tending to $\infty,$ and assume $a_n =a_{-n}$.  Suppose that $J$ is a closed interval contained in an interval $I$ of length $2\pi$. Assume that 
\begin{align*}
\sum_{n=-\infty}^{\infty}|c(n)|<\infty.
\end{align*} 
Then, If $g$ is a function with period $2\pi$ which equals 
\begin{align*}
\sum_{n=-\infty}^{\infty}c(n)e^{ia_n x}
\end{align*}
on $I$, the Fourier series of $g$ converges uniformly on $J.$
\end{lemma}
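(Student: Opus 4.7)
The plan is to exploit the absolute summability $\sum_n |c(n)| < \infty$ together with the strict inclusion $J \subset I$, and a Riemann-localization type argument, to control the ordinary Fourier partial sums of $g$. Since $\sum_n |c(n)| < \infty$, the series $\sum_n c(n) e^{i a_n x}$ converges absolutely and uniformly on $\mathbb{R}$, so $g$ is continuous on $I$ and, by periodicity, on $\mathbb{R}$; in particular its Fourier coefficients $\hat{g}(m)$ are well-defined, and interchanging summation with integration gives the symmetric Fourier partial sum
\begin{align*}
S_N(x) \;=\; \sum_n c(n)\, T_N^{(n)}(x),
\end{align*}
where $T_N^{(n)}$ denotes the $N$-th Fourier partial sum of the $2\pi$-periodic extension of $e^{i a_n x}|_I$.

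Next I would invoke the classical Dini--Jordan (Riemann localization) theorem: the periodization of $e^{i a_n x}|_I$ is piecewise smooth with at most a single jump at the endpoints of $I$, so for every fixed $n$ its Fourier partial sums converge to $e^{i a_n x}$ uniformly on every closed interval $J$ compactly contained in $I$. The task is then to promote this individual-term convergence to convergence of the weighted series $\sum_n c(n)\bigl(T_N^{(n)}(x) - e^{i a_n x}\bigr)$ uniformly on $J$.

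The standard trick is truncation: fix a threshold $M$ and split the sum at $|n| = M$. For $|n| \leq M$ only finitely many terms appear, and each converges uniformly to zero on $J$ as $N \to \infty$, so the head vanishes in the limit. For $|n| > M$, I would use a Dirichlet-kernel bound that is uniform on $J$ --- available because $J$ is bounded away from the jump set $\partial I$, so the kernel $D_N$ is evaluated at points bounded away from its singularity --- to conclude $|T_N^{(n)}(x) - e^{i a_n x}| = O(1)$ uniformly in $n$ and $x \in J$. The tail is then bounded by $O\bigl(\sum_{|n|>M} |c(n)|\bigr)$, which tends to zero as $M \to \infty$. Letting $N \to \infty$ first and then $M \to \infty$ yields the desired uniform convergence of $S_N$ to $g$ on $J$.

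The main obstacle is the uniform-in-$n$ bound on $T_N^{(n)}(x) - e^{i a_n x}$ for $x \in J$. This forces a careful Dirichlet-kernel estimate: one must verify that the constants produced by convolving $e^{i a_n t}$ with $D_N(x-t)$ over $I$ depend only on $\mathrm{dist}(J, \partial I) > 0$ and not on $a_n$. The positivity and strict monotonicity of $\{a_n\}$ play no direct role here; what matters is that the exponentials have modulus $1$ and that the jump location of the periodization is fixed at $\partial I$. Once this uniform bound is in hand, the hypothesis $\sum_n |c(n)| < \infty$ closes the argument cleanly.
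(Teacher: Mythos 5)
The paper does not prove this lemma: it is quoted verbatim from Chandrasekharan--Narasimhan (their Corollaries 1 and 2), who in turn derive it from Zygmund's theorems on trigonometric integrals. So there is no internal proof to compare against, and your overall architecture --- interchange sum and integral using $\sum_n|c(n)|<\infty$, localize each term individually, then truncate at $|n|=M$ and control the tail by a bound uniform in $n$ and $N$ --- is in fact the skeleton of Zygmund's own argument. The architecture is right.

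The gap is in the one step you yourself flag as the main obstacle, and the mechanism you offer for it would fail. Writing $T_N^{(n)}(x)=\frac{1}{2\pi}\int_{-\pi}^{\pi}h_n(x-u)D_N(u)\,du$, where $h_n$ is the periodization of the restriction of $e^{ia_nt}$ to $I$ and $x\in J$, the Dirichlet kernel is necessarily evaluated across its peak at $u=0$ (take $t=x\in I$); it is not ``evaluated at points bounded away from its singularity,'' and a modulus bound on that region gives only $O(\log N)$ because $\|D_N\|_{L^1}\sim\log N$. The quantity $\mathrm{dist}(J,\partial I)=\delta>0$ only controls the far part of the convolution, where $h_n$ has its jump and $1/|\sin(u/2)|\le 1/\sin(\delta/2)$. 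For the near part one must use oscillation, not distance: $\int_{|u|<\delta}e^{-ia_nu}D_N(u)\,du$ reduces, after replacing $1/\sin(u/2)$ by $2/u$ at the cost of an error that is $O(\delta^2)$ uniformly, to sine integrals $\int_0^{\delta}u^{-1}\sin\bigl((N+\tfrac12\pm a_n)u\bigr)\,du$, and these are bounded uniformly in $N$ and $a_n$ precisely because $\sup_{X}\bigl|\int_0^{X}t^{-1}\sin t\,dt\bigr|<\infty$. This sine-integral cancellation is the actual content of the lemma and the reason Zygmund's trigonometric-integral machinery is invoked; without it your tail is $O\bigl(\log N\sum_{|n|>M}|c(n)|\bigr)$ and the double limit $N\to\infty$, $M\to\infty$ does not close. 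A secondary slip: $g$ is generally \emph{not} continuous on $\mathbb{R}$, since $\sum_n c(n)e^{ia_nx}$ is not $2\pi$-periodic when the $a_n$ are non-integral, so the periodization has a jump at $\partial I$; this is harmless for defining $\hat g(m)$ and for the interchange of sum and integral (boundedness suffices), but it is the whole reason localization is needed in the first place.
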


\begin{lemma}\label{2}
With the same notation as above in Lemma \ref{1}, assume that 
\begin{align*}
\sup_{0\leq h\leq 1}\left|\sum_{k\leq a_n \leq k+h}c_n \right|=\mathrm{o}(1),
\end{align*}
as $k \to \infty$, and
\begin{align*}
\sum_{n=-\infty}^{\infty}\frac{|c_n|}{a_n}<\infty.
\end{align*}
Let $A(x)$ be a $C^{\infty}$ function with compact support on $I$, which is equal to $1$ on $J.$ Furthernore, let $B(x)$ be a $C^{\infty}$ function. Then, the series\begin{align*}
B(x)\sum_{n=-\infty}^{\infty}c(n)e^{ia_n x}
\end{align*}
is uniformly equi-convergent on $J$ with the differentiated series of the Fourier series of function with period $2\pi$, which equals 
\begin{align*}
A(x)\sum_{n=-\infty}^{\infty}c(n)W_n(x)
\end{align*}
on $I$, where $W_n(x)$ is an antiderivative $B(x)e^{ia_n x}$.
\end{lemma}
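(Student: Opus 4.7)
The plan is to reduce the equi-convergence assertion to a careful comparison of partial sums of the two series, driven by the two structural hypotheses on $(c_n)$. I would first show that
\[
F(x) := A(x)\sum_{n=-\infty}^{\infty} c(n)\,W_n(x)
\]
is well-defined, continuous and bounded on $I$, hence extends to a continuous $2\pi$-periodic function whose Fourier series is meaningful. An integration by parts applied to $W_n(x)=\int^x B(t)e^{ia_nt}\,dt$ gives $W_n(x) = \dfrac{B(x)e^{ia_nx}}{ia_n} - \dfrac{1}{ia_n}\int^x B'(t)e^{ia_nt}\,dt$, so $|W_n(x)| \ll 1/a_n$ uniformly in $x$. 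Combined with $\sum |c_n|/a_n < \infty$, the defining series converges absolutely and uniformly, and the compact support of $A$ inside $I$ secures $2\pi$-periodicity.

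Next I would compute $\hat F(m)$ by termwise integration (justified by the absolute convergence just obtained) and integrate by parts once using $W_n'=Be^{ia_nx}$ and the vanishing of $A$ at the endpoints of $I$ to kill boundary terms, arriving at
\[
im\,\hat F(m) \;=\; \frac{1}{2\pi}\int_{I} A'(x)\sum_n c(n)W_n(x)\,e^{-imx}\,dx \;+\; \frac{1}{2\pi}\int_{I} A(x)B(x)\sum_n c(n)e^{ia_nx}\,e^{-imx}\,dx,
\]
where each inner series is understood via a suitable truncation/Abel interpretation. Since $A\equiv 1$ on $J$, the factor $A'$ vanishes on $J$, so only the second piece is relevant for the behaviour on $J$.

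The heart of the argument is then to show that the symmetric partial sum $\sum_{|m|\le N} im\,\hat F(m)e^{imx}$ of the differentiated Fourier series and the partial sum $B(x)\sum_{|a_n|\le N} c(n)e^{ia_nx}$ differ by a quantity that converges to zero uniformly on $J$. I would insert the Dirichlet kernel expansion for the Fourier partial sum and reorganise: the $A'$-contribution reduces, after a second summation by parts together with the uniform bound $|W_n|\ll 1/a_n$, to a uniformly convergent remainder controlled again by $\sum |c_n|/a_n <\infty$; the $AB$-contribution is a convolution of the Dirichlet kernel with $\sum c(n)e^{ia_nx}$, whose tail is controlled by Abel summation block-by-block over the unit intervals $[k,k+1)$, using the hypothesis
\[
\sup_{0\le h\le 1}\Bigl|\sum_{k\le a_n\le k+h} c_n\Bigr| = o(1)\qquad (k\to\infty).
\]

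The main obstacle will be the second comparison: the Fourier partial sum imposes a sharp symmetric cutoff in $m$, while the $a_n$-series is cut off in $n$, and the interaction of these two truncations with the oscillatory Dirichlet kernel has to be shown to produce only $o(1)$ errors uniformly on the closed subinterval $J$. The block-by-block Abel summation on the $a_n$-side converts the oscillatory tail into a telescoping sum of quantities bounded by the $o(1)$ hypothesis multiplied by uniformly bounded kernel pieces; combined with the $\sum|c_n|/a_n<\infty$ control on the $A'$-term this yields the desired uniform equi-convergence on $J$ and hence the lemma.
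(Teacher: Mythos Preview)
The paper does not supply a proof of this lemma at all: it is stated there as a quotation from Chandrasekharan--Narasimhan \cite[Corollary~2]{chanar1} (``Next, we recall two lemmas from \cite[Corollary 1 and 2]{chanar1}'') and is used as a black box in the proof of Lemma~\ref{k+2wala}. So there is no in-paper argument to compare your proposal against.

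Regarding your outline itself, the overall strategy---Dirichlet-kernel comparison of the two partial sums, integration by parts to extract the $1/a_n$ decay of $W_n$, and block-by-block Abel summation over unit intervals in the $a_n$-variable to exploit the $o(1)$ hypothesis---is the right skeleton and is essentially the mechanism behind the Chandrasekharan--Narasimhan/Zygmund argument. The one place where your sketch is genuinely loose is the passage where you write the inner sum $\sum_n c(n)e^{ia_nx}$ inside the Fourier-coefficient integral ``via a suitable truncation/Abel interpretation'': under the stated hypotheses that series need not converge pointwise, so you cannot first form $F$ and then differentiate termwise or integrate against $e^{-imx}$ without care. The correct bookkeeping is to work throughout with the truncated sums $\sum_{a_n\le N}$ on the exponential side and the Fourier partial sums $\sum_{|m|\le N}$ on the other side, and to show directly that their difference is $o(1)$ uniformly on $J$; the $A'$-term and the kernel/cutoff interaction you mention are then handled exactly as you indicate. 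If you want a fully rigorous write-up, follow Lemmas~6--8 and Corollaries~1--2 of \cite{chanar1}, which carry out precisely this comparison.
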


\begin{lemma}\label{k+2wala}
Suppose that
\begin{align}\label{3/2}
\sum_{n=1}^{\infty}\frac{|\s_{\mathbb{K},\a}(n)|}{n^{\frac{k+\rho\left(1+\textup{Re}(\a)\right)+3/2}{2\rho}}}< \infty, 
\end{align}
and
\begin{align}\label{1/2}
\sup_{0\leq h\leq 1}\left|\sum_{m^{2\rho}\leq \mu_n \leq (m+h)^{2\rho}} \frac{|\s_{\mathbb{K},\a}(n)|}{n^{\frac{k+\rho\left(1+\textup{Re}(\a)\right)+1/2}{2\rho}}}\right|=\mathrm{o}(1)
\end{align}
as $m \to \infty$.

Define, for $y>0$
\begin{align*}
F_{k+1}(\a,y):=\sum_{n=1}^{\infty}\s_{\mathbb{K},\a}(n)\left(\frac{A^4y^{2\rho}}{n}\right)^{(1+k)/2}\mathbb{J}_{1+k}\left(\a,(2y)^{\rho}\sqrt{\frac{n}{A^4}}\right),
\end{align*}
Then, $\rho\cdot 2^{\rho}y^{2\rho-1}F_{k+1}(\a,y)$ is uniformly equi-convergent on any interval $J$ of length less than $\pi/\rho$ with the differentiated series of the Fourier series of a function with period $\pi/\rho$ which on $I$ equals $A(y)F_{k+2}(\a,y)$, where $I$ is of length $\pi/\rho$ and contains $J.$
\end{lemma}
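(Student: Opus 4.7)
The plan is to reduce the claim to an application of Lemma \ref{2} by substituting the asymptotic for $\mathbb{J}_{1+k}$ provided by Lemma \ref{JJ} into $F_{k+1}(\a,y)$. With $x=y^{2\rho}n/A^{4}$ this decomposition writes $F_{k+1}(\a,y)$ as a principal oscillatory sum $F_{k+1}^{\textup{osc}}(\a,y)$, built from the Bessel factor $J_{k+\rho-2r-1}\left(2y\mu_n^{1/(2\rho)}\right)$ with $\mu_n:=ne^{-\Theta}/A^{4}$, plus a remainder $E_{k+1}(\a,y)$ carrying the $O$-term of Lemma \ref{JJ}. Tracking the algebraic prefactors, the $n$th coefficient of $E_{k+1}$ is bounded on compact subsets of $(0,\infty)$ by a constant times $|\s_{\mathbb{K},\a}(n)|\,n^{-(k+\rho(1+\textup{Re}(\a))+3/2)/(2\rho)}$, so hypothesis \eqref{3/2} gives absolute and locally uniform convergence of $E_{k+1}$; in particular $\rho\cdot 2^{\rho}y^{2\rho-1}E_{k+1}$ is absolutely summable, and Lemma \ref{1} yields uniform convergence of its Fourier series on any $J\subset I$.

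For the oscillatory piece, I would next insert \eqref{Besseljasym} into $J_{k+\rho-2r-1}\left(2y\mu_n^{1/(2\rho)}\right)$. Its leading cosine/sine term converts $F_{k+1}^{\textup{osc}}$ into a series of the form $B(y)\sum_{n\geq 1}c_n e^{ia_n y}+\text{c.c.}$, where $a_n=2\mu_n^{1/(2\rho)}$, $c_n$ is proportional to $\s_{\mathbb{K},\a}(n)\,n^{-(k+\rho(1+\textup{Re}(\a))+1/2)/(2\rho)}$ (after the $(2y\mu_n^{1/(2\rho)})^{-1/2}$ factor is extracted), and $B(y)$ is a smooth algebraic prefactor collecting $\rho\cdot 2^{\rho}y^{2\rho-1}$ together with the resulting powers of $y$. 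The subleading terms of \eqref{Besseljasym} decay by further powers of $a_n^{-1}\sim n^{-1/(2\rho)}$, so each of them produces an absolutely convergent residual series, handled by Lemma \ref{1} as above. Lemma \ref{2} now applies: hypothesis \eqref{1/2}, rewritten via $m\leq\mu_n^{1/(2\rho)}\leq m+h$, is exactly the $\sup_h$-smallness requirement on $c_n$, while hypothesis \eqref{3/2} delivers $\sum|c_n|/a_n<\infty$, because the extra factor $a_n^{-1}\sim n^{-1/(2\rho)}$ lifts the denominator exponent from $(k+\rho(1+\textup{Re}(\a))+1/2)/(2\rho)$ to $(k+\rho(1+\textup{Re}(\a))+3/2)/(2\rho)$.

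It remains to identify the antiderivative $W_n(y)$ of $B(y)e^{ia_n y}$ with the $n$th summand of $F_{k+2}(\a,y)$. A direct termwise computation with $X(y)=(2y)^{\rho}\sqrt{n/A^{4}}$ and the recurrence $\frac{d}{dx}[x^{\nu}\mathbb{J}_{\nu}(\a,x)]=x^{\nu}\mathbb{J}_{\nu-1}(\a,x)$ of \eqref{diff} produces the relation
\begin{align*}
\frac{d}{dy}F_{k+2}(\a,y)=\rho\cdot 2^{\rho}y^{2\rho-1}F_{k+1}(\a,y),
\end{align*}
which identifies the periodic function whose differentiated Fourier series is equi-convergent with $\rho\cdot 2^{\rho}y^{2\rho-1}F_{k+1}(\a,y)$ on $J$ as precisely the one that equals $A(y)F_{k+2}(\a,y)$ on $I$. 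The main obstacle is bookkeeping: one must track the algebraic prefactors through the two-step asymptotic (Lemma \ref{JJ} and then \eqref{Besseljasym}) so that the $c_n/a_n$ and $\sup_h$ conditions match \eqref{3/2}--\eqref{1/2} exactly, and ensure that the cutoff $A(y)$ introduces only absolutely convergent cross-terms (again controlled by Lemma \ref{1}); no analytic ingredient beyond Lemmas \ref{JJ}, \ref{1}, and \ref{2} is needed.
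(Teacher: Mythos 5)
Your proposal is correct and follows essentially the same route as the paper: expand $\mathbb{J}_{1+k}$ via Lemma \ref{JJ} and \eqref{Besseljasym}, apply Lemma \ref{2} to the leading oscillatory term (with \eqref{1/2} giving the $\sup_h$ condition and \eqref{3/2} giving $\sum|c_n|/a_n<\infty$), absorb the $O$-term and subleading Bessel terms into absolutely convergent series handled by Lemma \ref{1} (the paper instead subtracts the cosine and sine terms and argues the remainder is continuously differentiable, a cosmetic difference), and identify the antiderivative with the $F_{k+2}$ summand via \eqref{diff}. The termwise identity $\frac{d}{dy}F_{k+2}(\a,y)=\rho\cdot 2^{\rho}y^{2\rho-1}F_{k+1}(\a,y)$ you state checks out and is exactly what the paper uses in integral form.
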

\begin{proof}
Consider the function, 
\begin{align*}
f(y):=\rho\cdot &2^{\rho}y^{2\rho-1+\rho(k+1)}\sum_{n=1}^{\infty}\s_{\mathbb{K},\a}(n)\left(\frac{A^4}{n}\right)^{(1+k)/2}\Bigg\{\mathbb{J}_{1+k}\left(\a,(2y)^{\rho}\sqrt{\frac{n}{A^4}}\right)\\
&-A^*\sqrt{\frac{e^{\frac{\Theta}{2\rho}}}{\pi}} \left(\frac{n y^{2\rho}}{A^4}\right)^{\frac{2k(\rho-1)-2\rho\a-1}{4\rho}}\cos w'+c_0A^*\sqrt{\frac{e^{\frac{\Theta}{2\rho}}}{\pi}}\left(\frac{n  y^{2\rho}}{A^4}\right)^{\frac{2k(\rho-1)-2\rho\a-3}{4\rho}}\sin w'\Bigg\}
\end{align*}
Upon invoking \eqref{Besseljasym}, \eqref{JJ}, and \eqref{3/2}, we see that $f(y)$ is continuously differentiable for $y>0.$ Assume $g(x)$ to be a function with period $\frac{\pi}{\rho}$ which coincides with $f(y)$ in interval $I$. Since $f(y)$ is continuously differentiable, the Fourier series expansion of $g(y)$ is uniformly convergent on interval $J$. By the hypothesis \eqref{3/2}, \eqref{1/2}, and Lemma \ref{2}, the series 
\begin{align*}
2^{\rho}A^*\sqrt{\frac{e^{\frac{\Theta}{2\rho}}}{\pi}}y^{2\rho-1}\sum_{n=1}^{\infty}\s_{\mathbb{K},\a}(n)\left(\frac{A^4y^{2\rho}}{n}\right)^{(1+k)/2} \left(\frac{n y^{2\rho}}{A^4}\right)^{\frac{2k(\rho-1)-2\rho\a-1}{4\rho}}\cos w'
\end{align*}
is uniformly equi-convergent on $J$ with the differentiated series of the Fourier series of a function with period $\frac{\pi}{\rho}$, which equals on $I$, given by
\begin{align*}
2^{\rho}A^*\sqrt{\frac{e^{\frac{\Theta}{2\rho}}}{\pi}}A(y)\sum_{n=1}^{\infty}\s_{\mathbb{K},\a}(n)\int_{\b}^{y}\left(\frac{A^4t^{2\rho}}{n}\right)^{(1+k)/2} \left(\frac{n t^{2\rho}}{A^4}\right)^{\frac{2k(\rho-1)-2\rho\a-1}{4\rho}}\cos w'dt, 
\end{align*}
where $\b>0.$ Using Lemma \ref{1}, we can derive a similar result for series
\begin{align*}
2^{\rho} c_0A^*\sqrt{\frac{e^{\frac{\Theta}{2\rho}}}{\pi}}y^{2\rho-1}\sum_{n=1}^{\infty}\s_{\mathbb{K},\a}(n)\left(\frac{A^4y^{2\rho}}{n}\right)^{(1+k)/2}\left(\frac{n  y^{2\rho}}{A^4}\right)^{\frac{2k(\rho-1)-2\rho\a-3}{4\rho}}\sin w'.
\end{align*}
Hence,  
\begin{align*}
2^{\rho}y^{2\rho-1}F_{k+1}(\a,y)=2^{\rho}y^{2\rho-1}\sum_{n=1}^{\infty}\s_{\mathbb{K},\a}(n)\left(\frac{A^4y^{2\rho}}{n}\right)^{(1+k)/2}\mathbb{J}_{1+k}\left(\a,(2y)^{\rho}\sqrt{\frac{n}{A^4}}\right)
\end{align*}
is uniformly equi-convergent on $J$ with the differentiated series of Fourier series of a function with period $\pi/\rho$, which equals on $I$, thus, if we invoke \eqref{diff} and choose $\b$ such that 
\begin{align*}
\lim_{y \to \b}y^{(k+2)\rho}\cdot\mathbb{J}_{1+k}\left(\a,(2y)^{\rho}\sqrt{\frac{n}{A^4}}\right) \to 0,
\end{align*}
then
\begin{align*}
A(y)\sum_{n=1}^{\infty}\s_{\mathbb{K},\a}(n)\int_{\b}^{y}\rho\cdot  2^{\rho}t^{2\rho-1}&\left(\frac{A^4t^{2\rho}}{n}\right)^{(1+k)/2}\mathbb{J}_{1+k}\left(\a,(2t)^{\rho}\sqrt{\frac{n}{A^4}}\right)dt\\
&=A(y)F_{k+2}(\a,y).
\end{align*}
This completes the proof.
\end{proof}

\begin{proof}[Theorem \textup{\ref{extendedmaintheorem}}][]
Let's assume, for $\b>\max\{1,1+\textup{Re}(\a) \}$\begin{align*}
\sum_{n=1}^{\infty}\frac{|\s_{\mathbb{K},\a} (n)|}{n^{\b}}<\infty,
\end{align*}
and 
\begin{align*}
\sup_{0\leq h\leq 1}\left|\sum_{m^{2\rho}\leq \mu_n \leq (m+h)^{2\rho}} \frac{|\s_{\mathbb{K},\a}(n)|}{n^{\b-\frac{1}{2\rho}}}\right|=\mathrm{o}(1); ~as~m \to \infty.
\end{align*}
Put $x=y^{2\rho}$ in \eqref{maintheoremeqn}, where $y$ is in the interval of length less than $\frac{\pi}{\rho}$. Then,  Lemma \ref{k+2wala} holds true, provided $k>\rho\left(1+|\textup{Re}(\a)|\right)-\frac{3}{2}$. That is, $k+1>\rho\left(1+|\textup{Re}(\a)|\right)-\frac{1}{2}$, hence, Theorem \ref{maintheorem} implies,
\begin{align*}
A(y)&\left(\frac{1}{\Gamma(k+2)}\int_{0}^{y}\sum_{n\leq t^{2\rho}}{\vphantom{\sum}}'2\rho\cdot\sigma_{\mathbb{K},-\alpha}(n)(t^{2\rho}-n)^kt^{2\rho-1} - R_{\a}\left(k+1,\rho,y^{2\rho}\right)\right)\\
&=A(y)\frac{2^{(k+1)(1-\rho)}}{A^{2(1+\a)}}\sum_{n=1}^{\infty}\s_{\mathbb{K},\a}(n)\left(\frac{A^4y^{2\rho}}{n}\right)^{(2+k)/2}\mathbb{J}_{2+k}\left(\a,2^{\rho}\sqrt{\frac{nx}{A^4}}\right)\\
&=A(y)\frac{2^{(k+1)(1-\rho)}}{A^{2(1+\a)}}F_{k+2}(\a,y). 
\end{align*}
Since $A(y)=1$ on interval $J$, upon invoking the localization principle \cite[Theorem 6.6, p. 53]{Zygmund2} and the properties of the Fourier series
\begin{align*}
\frac{2\rho \cdot t^{2\rho-1}}{\Gamma(k+2)}\sum_{n\leq t^{2\rho}}{\vphantom{\sum}}'\sigma_{\mathbb{K},-\alpha}(n)(t^{2\rho}-n)^k
\end{align*} 
in interval $I$, we see that, \eqref{maintheoremeqn} holds true for $k>\rho\left(1+|\textup{Re}(\a)|\right)-\frac{3}{2}.$
\end{proof}

Since we derive Theorem \ref{maintheorem} for extended region $k>\rho\left(1+|\textup{Re}(\a)|\right)-\frac{3}{2}$, we obtain a result of Laurin\v{c}ikas \cite[Theorem]{Lauri} for rational field in region $k>|\textup{Re}(\a)|-1/2$.  
\begin{proof}[Corollary \textup{\ref{Laurichikastheorem}}][]
Let $\mathbb{K}=\mathbb{Q}$ in Theorem \ref{extendedmaintheorem}, we have for $k>|\textup{Re}(\a)|-\frac{1}{2}$ and $x>0,$
\begin{align}
\frac{1}{\Gamma(k+1)}&\sum_{n\leq x}{\vphantom{\sum}}'\sigma_{\mathbb{K},-\alpha}(n)(x-n)^k \nonumber \\
&=R_{\a}(k,\rho,x)+\frac{2^{k(1-\rho)}}{A^{2(1+\a)}}\sum_{n=1}^{\infty}\s_{\mathbb{K},\a}(n)\left(\frac{A^4x}{n}\right)^{(1+k)/2}\mathbb{J}_{1+k}\left(\a,2^{\rho}\sqrt{\frac{nx}{A^4}}\right).
\end{align}

Since $\mathbb{K}=\mathbb{Q}$, we note 
$$\rho=1,~A=\frac{1}{\sqrt{\pi}},~and~\sigma_{\mathbb{K},-\alpha}(n)=\s_{-\a}(n)=\sum_{d|n}d^{-\a}.$$ Then
from the above equation we obtain
\begin{align}\label{4.4}
\frac{1}{\Gamma(k+1)}&\sum_{n\leq x}{\vphantom{\sum}}'\s_{-\a}(n)(x-n)^k \nonumber \\
&=R_{\a}(k,1,x)+\pi^{(1+\a)}\sum_{n=1}^{\infty}\s_{\a}(n)\left(\frac{x}{n\pi^2}\right)^{(1+k)/2}\mathbb{J}_{1+k}\left(\a,2\pi\sqrt{nx}\right).
\end{align}
Upon using the definition \eqref{JJ} we can rewrite the sum on the right-hand side of the above equation as 
\begin{align}\label{sumJI}
&\pi^{(1+\a)}\sum_{n=1}^{\infty}\s_{\a}(n)\left(\frac{x}{n\pi^2}\right)^{(1+k)/2}\mathbb{J}_{1+k}\left(\a,2\pi\sqrt{nx}\right)=\pi^{(1+\a)}x^{1+k}\sum_{n=1}^{\infty}\s_{\a}(n)I_{1,0,x}(\pi^2 nx),
\end{align}
where from \eqref{G(s)} and \eqref{Ir1r2} 
\begin{align*}
I_{1,0,x}(x)=\frac{1}{2\pi i}\int_{\s-i\infty}^{\s+i\infty}\frac{\Gamma(1-s)\Gamma\left(\frac{s}{2} \right)\Gamma\left(\frac{s-\a}{2} \right)}{\Gamma(2-s+k)\Gamma\left(\frac{1-s}{2} \right)\Gamma\left(\frac{1-s+\a}{2} \right)}x^{-s}ds; \s>1.
\end{align*}
From here, we can use \cite[Equation 9 and Lemma 3]{Lauri} to complete the proof.
\end{proof}

\begin{center}
Acknowledgement 
\end{center}

We would like to thank Prof. Atul Dixit and Prof. Arnab Saha for their valuable comments and constant support throughout the research. We would also like to take this opportunity to thank our institution Indian Institute of Technology Gandhinagar for providing us a state-of-the-art research facilities.

\bibliographystyle{amsalpha}

\end{document}